\def\section{\@startsection{section}{1}{\z@}{-4.5ex plus -1ex minus
    -.2ex}{2.7ex plus .2ex}{\Large\bf}}
\newtheorem{definition}{Definition}%[section]
\newtheorem{lemma}{Lemma}%[section]
\newtheorem{remark}{Remark}%[section]
\newtheorem{theorem}{Theorem}%[section]
\numberwithin{equation}{section}
\renewenvironment{proof}{ %\removelastskip\par\medskip
\noindent{\bf Proof.} \rm}{\penalty-20\null\hfill $\square$}
\newcommand{\bbF}{{\mathbb F}}
\newcommand{\bbI}{{\mathbb I}}
\newcommand{\R}{{\mathbb R}}
\newcommand{\bbZ}{{\mathbb Z}}
\newcommand{\bfe}{\mathbf{e}}
\newcommand{\bff}{\mathbf{f}}
\newcommand{\bfg}{\mathbf{g}}
\newcommand{\bfh}{\mathbf{h}}
\newcommand{\bfk}{\mathbf{k}}
\newcommand{\bfn}{\mathbf{n}}
\newcommand{\bfu}{\mathbf{u}}
\newcommand{\bfv}{\mathbf{v}}
\newcommand{\bfw}{\mathbf{w}}
\newcommand{\bfx}{\mathbf{x}}
\newcommand{\bfz}{\mathbf{z}}
\newcommand{\bfphi}{\mbox{\boldmath $\phi$}}
\newcommand{\bfC}{\mathbf{C}}
\newcommand{\bfF}{\mathbf{F}}
\newcommand{\bfG}{\mathbf{G}}
\newcommand{\bfH}{\mathbf{H}}
\newcommand{\bfL}{\mathbf{L}}
\newcommand{\bfV}{\mathbf{V}}
\newcommand{\bfW}{\mathbf{W}}
\newcommand{\cA}{{\cal A}}
\newcommand{\cC}{{\cal C}}
\newcommand{\gB}{\mathfrak{B}}
\newcommand{\rmd}{\mathrm{d}}
\newcommand{\br}{\hspace{0.7pt}}
\renewcommand{\div}{\mathrm{div}\,}
\newcommand{\bfzero}{\mathbf{0}}
\newcommand{\dist}{\mathrm{dist}}
\newcommand{\supp}{\mathrm{supp}\,}
\newcommand{\Gammai}{\Gamma_{\hspace{-1.1pt} \rm in}}
\newcommand{\Gammao}{\Gamma_{\hspace{-1.1pt} \rm out}}
\newcommand{\Gammaw}{\Gamma_{\hspace{-1.4pt} \rm p}}
\newcommand{\Gammam}{\Gamma_{\hspace{-1pt} 0}}
\newcommand{\Gammap}{\Gamma_{\hspace{-1pt} 1}}
\newcommand{\gammai}{\gamma_{\rm in}}
\newcommand{\gammao}{\gamma_{\rm out}}
\newcommand{\Am}{A_0}
\newcommand{\Ap}{A_1}
\newcommand{\Bm}{B_0}
\newcommand{\Bp}{B_1}
\newcommand{\Omegah}{\widehat{\Omega}}
\newcommand{\bfgs}{\bfg_{\displaystyle *}}
\newcommand{\cCs}{\boldsymbol{\cC}^{\infty}_{\sigma}(\Omega)}
\newcommand{\cCns}{\boldsymbol{\cC}^{\infty}_{0,\sigma}(\Omega)}
\newcommand{\cCn}{\boldsymbol{\cC}^{\infty}_0(\Omega)}
\newcommand{\Vs}[1]{\bfV_{\sigma}^{1,{#1}}\hspace{-0.6pt}(\Omega)}
\newcommand{\Vsd}[1]{\bfV_{\sigma}^{-1,{#1}}\hspace{-0.6pt}(\Omega)}
\newcommand{\vs}[1]{\bfV_{\sigma}^{1,{#1}}}
\newcommand{\vsd}[1]{\bfV_{\sigma}^{-1,{#1}}}
\newcommand{\bfWp}{\bfW_{\rm per}}
\newcommand{\bfvt}{\widetilde{\bfv}}
\newcommand{\pt}{\widetilde{p}}
\newcommand{\bfft}{\widetilde{\bff}}
\newcommand{\hht}{\widetilde{h}}
\newcommand{\varphit}{\widetilde{\varphi}}
\newcommand{\psit}{\widetilde{\psi}}
\newcommand{\ft}{\widetilde{f}}
\newcommand{\operL}{\mathscr{L}}
\newcommand{\lclosed}{[\hbox to 1pt{}}
\newcommand{\rclosed}{\hbox to 1pt{}]}
\newcommand{\llangle}{\langle\hspace{-1.9pt}\langle}
\newcommand{\rrangle}{\rangle\hspace{-1.9pt}\rangle}
\newcommand{\blangle}{\bigl\langle}
\newcommand{\brangle}{\bigr\rangle}
\newcommand{\bllangle}{\blangle\hspace{-2.5pt}\blangle}
\newcommand{\brrangle}{\brangle\hspace{-2.5pt}\brangle}
\newcounter{constants}
\newcommand{\cn}[2]{ \addtocounter{constants}{1}
\newcounter{c#1#2}
\setcounter{c#1#2}{\value{constants}} c_{\arabic{c#1#2}} }
\newcommand{\cc}[2]{c_{\arabic{c#1#2}}}
\definecolor{lightgrey}{rgb}{0.82,0.82,0.82}
\begin{document}

\title{\LARGE \bf The weak Stokes problem associated with a flow through a
profile cascade in $L^r$-- framework}

\author{Tom\'a\v{s} Neustupa}

\date{}

\maketitle

\begin{abstract}
We study the weak steady Stokes problem, associated with a flow of
a Newtonian incompressible fluid through a spatially periodic
profile cascade, in the $L^r$--\br\br setup. The used mathematical
model is based on the reduction to one spatial period, represented
by a bounded 2D domain $\Omega$. The corresponding Stokes problem
is formulated by means of three types of boundary conditions: the
conditions of periodicity on the ``lower'' and ''upper'' parts of
the boundary, the Dirichlet boundary conditions on the ``inflow''
and on the profile and an artificial ``do nothing''--type boundary
condition on the ``outflow''. Under appropriate assumptions on the
given data, we prove the existence and uniqueness of a weak
solution in $\bfW^{1,r}(\Omega)$ and its continuous dependence on
the data. We explain the sense in which the ``do nothing''
boundary condition on the ``outflow'' is satisfied.
\end{abstract}

\noindent
{\it AMS math.~classification (2000):} \ 35Q30, 76D03, 76D05.

\noindent
{\it Keywords:} \ The Stokes problem, weak solution, artificial
boundary condition.

\section{Introduction} \label{S1}

{\bf One spatial period: domain $\Omega$.} \ Mathematical models
of a flow through a three--dimensi\-onal turbine wheel often use
the reduction to two space dimensions, where the flow is stu\-died
as a flow through an infinite planar profile cascade. In an
appropriately chosen Cartesian coordinate system, the profiles in
the cascade periodically repeat with the period $\tau$
\begin{wrapfigure}[14]{r}{81mm}
\begin{center}
  \setlength{\unitlength}{0.5mm}
  \begin{picture}(162,125)
  \put(5,60){\vector(1,0){151}} \put(20,37){\vector(0,1){95}}
  \put(143,64){$x_1$} \put(23,125){$x_2$}
  \put(28.5,106.4){\vector(0,1){3.2}}
  \dashline[+30]{2.2}(28.5,104.9)(28.5,75.2)
  \dashline[+30]{2.2}(130.1,70)(130.1,96)
  \dashline[+30]{2.2}(130.1,108)(130.1,132)
  \dashline[+30]{2.2}(130.1,40)(130.1,31)
  \put(118,100){$x_1=d$}
  \put(28.5,73.6){\vector(0,-1){3.7}} \put(31,97){$\tau$}
  \put(23,45){$\gammai$} \put(133,123){$\gammao$}
  \thicklines % profil P
  \color{lightgrey}
\put(40.2,90){\line(1,0){35}} \put(40.2,90.4){\line(1,0){34}}
\put(40.2,90.8){\line(1,0){33}} \put(40.4,91.2){\line(1,0){31.6}}
\put(40.4,91.6){\line(1,0){30.4}} \put(40.5,92){\line(1,0){29.3}}
\put(40.5,92.4){\line(1,0){28.1}}\put(40.8,92.8){\line(1,0){26.5}}
\put(41.0,93.2){\line(1,0){25.0}}\put(41.5,93.6){\line(1,0){23.3}}
\put(41.8,94.0){\line(1,0){21.3}}\put(42.1,94.4){\line(1,0){19.3}}
\put(42.5,94.8){\line(1,0){17.3}}\put(43.2,95.2){\line(1,0){14.8}}
\put(44.3,95.6){\line(1,0){11.8}}\put(46.3,96.0){\line(1,0){6.8}}
\put(40.2,89.6){\line(1,0){36.1}}\put(40.4,89.2){\line(1,0){36.7}}
\put(40.6,88.8){\line(1,0){37.4}}\put(40.8,88.4){\line(1,0){38.2}}
\put(41.1,88.0){\line(1,0){38.9}}\put(41.5,87.6){\line(1,0){39.4}}
\put(42.1,87.2){\line(1,0){39.5}}\put(42.7,86.8){\line(1,0){39.8}}
\put(43.7,86.4){\line(1,0){39.7}}\put(44.5,86){\line(1,0){39.6}}
\put(45.7,85.6){\line(1,0){39.3}}\put(46.8,85.2){\line(1,0){39.1}}
\put(48.5,84.8){\line(1,0){38.2}}\put(50.4,84.4){\line(1,0){37.0}}
\put(52.3,84.0){\line(1,0){35.8}}\put(55.0,83.6){\line(1,0){33.8}}
\put(58.3,83.2){\line(1,0){31.2}}\put(61.7,82.8){\line(1,0){28.6}}
\put(65.3,82.4){\line(1,0){25.6}}\put(68.0,82.0){\line(1,0){23.5}}
\put(70.2,81.6){\line(1,0){21.9}}\put(72.2,81.2){\line(1,0){20.9}}
\put(74.2,80.8){\line(1,0){19.4}}\put(75.9,80.4){\line(1,0){18.3}}
\put(77.5,80.0){\line(1,0){17.3}}\put(79.0,79.6){\line(1,0){16.3}}

\put(80.8,79.2){\line(1,0){15.3}}\put(82.3,78.8){\line(1,0){14.4}}
\put(84.0,78.4){\line(1,0){13.3}}\put(85.5,78.0){\line(1,0){12.6}}
\put(87,77.6){\line(1,0){11.7}}\put(88.5,77.2){\line(1,0){10.8}}
\put(90.0,76.8){\line(1,0){9.9}}\put(91,76.4){\line(1,0){9.3}}
\put(92.2,76.0){\line(1,0){8.8}}\put(93.4,75.6){\line(1,0){8.2}}
\put(94.4,75.2){\line(1,0){7.4}}\put(95.8,74.8){\line(1,0){6.7}}
\put(97.1,74.4){\line(1,0){5.6}}\put(98.3,74.0){\line(1,0){4.6}}
\put(99.5,73.6){\line(1,0){3.5}}\put(100.3,73.2){\line(1,0){3.0}}
\color{black} % hranice profilu P
\qbezier(40,90)(40,100)(63,94) \qbezier(40,90)(40,85)(60,83)
\qbezier(63,94)(86,87)(98,78) \qbezier(60,83)(80,80.5)(97.2,74.3)
\qbezier(98,78)(109,70)(97.2,74.3) \put(51,88){$P$}
  \thicklines % hranice oblasti \Omega
  \put(20,69){\line(0,1){40}} \put(130.2,40){\line(0,1){40}}
  \qbezier(20,69)(70,76)(130,40) \qbezier(20,109)(70,116)(130,80)
  \thinlines
  \put(9.5,89){$\Gammai$} \put(116.2,67){$\Gammao$}
  \put(75,107){$\Gammap$} \put(75,67){$\Gammam$}
  \put(37,79){$\Gammaw$}
  \put(109,78){$\Omega$}
  \put(8,67){$\Am$} \put(8,107){$\Ap$}
  \put(133,39){$\Bm$} \put(133,78){$\Bp$}
  \thicklines
  \put(43,23){Fig.~1: \ Domain $\Omega$}
  \end{picture}
\end{center}
\end{wrapfigure}
in the $x_2$--direction. It can be natural\-ly assumed that the
flow is $\tau$--periodic in variable $x_2$, too. This enables one
to study the flow through one spatial period, which contains just
one profile -- see domain $\Omega$ and profile $P$ on Fig.~1. This
approach is used e.g.~in \cite{DFF} and \cite{KLP}, where the
authors present the numerical analysis of the models or
corresponding numerical simulations, and in the papers,
\cite{FeNe1}--\cite{FeNe3} and \cite{TNe1}--\cite{TNe4}, devoted
to theoretical analysis of the mathema\-tical models.

We assume that the fluid flows into the cascade through the
straight line $\gammai$ (the $x_2$--axis, the inflow) and
essentially leaves the cascade through the straight line
$\gammao$, whose equation is $x_1=d$ (the outflow). By
``essentially'' we mean that we do not exclude possible reverse
flows on $\gammao$. The considered spatial period $\Omega$ is
mainly determined by artificially chosen curves $\Gammam$ and
$\Gammap\equiv\Gammam+\tau\br\bfe_2$, which form the ``lower'' and
``upper'' parts of $\partial\Omega$ (the boundary of $\Omega$),
respectively. (See Fig.~1. We denote by $\bfe_2$ the unit vector
in the $x_2$--direction.) The parts of $\partial\Omega$, lying on
the straight lines $\gammai$ and $\gammao$, are the line segments
$\Gammai\equiv\Am\Ap$ and $\Gammao\equiv\Bm\Bp$, respectively, of
length $\tau$. The last part of $\partial\Omega$, i.e.~the
boundary of profile the $P$ is denoted by $\Gammaw$. We assume
that the domain $\Omega$ is Lipschitzian and the curves $\Gammam$
and $\Gammap$ are of the class $C^{\infty}$.

\vspace{4pt} \noindent
{\bf The Stokes boundary--value problem on one spatial period.} \
The flow of an incompressible Newtonian fluid is described by the
Navier-Stokes equations. An important role in theoretical studies
of these equations play the properties of solutions of the steady
Stokes problem. If one neglects the derivative with respect to
time and the nonlinear ``convective'' term in the momentum
equation in the Navier--Stokes system, one obtains
\begin{equation}
-\nu\Delta\bfu+\nabla p\ =\ \bff. \label{1.1}
\end{equation}
This equation is studied together with the equation of continuity
(= condition of incompressibility)
\begin{equation}
\div\bfu\ =\ 0 \label{1.2}
\end{equation}
and the equations (\ref{1.1}), (\ref{1.2}) represent the so called
{\it steady Stokes system}, or the {\it steady Stokes equations.}
The unknowns are $\bfu=(u_1,u_2)$ (the velocity) and $p$ (the
pressure). The positive constant $\nu$ is the kinematic
coefficient of viscosity and $\bff$ denotes the external body
force. The density of the fluid can be without loss of generality
supposed to be equal to one. The system (\ref{1.1}), (\ref{1.2})
is completed by appropriate boundary conditions on
$\partial\Omega$. One can naturally assume that the velocity
profile on $\Gammai$ is known, which leads to the inhomogeneous
Dirichlet boundary condition
\begin{equation}
\bfu\ =\ \bfg \qquad \mbox{on}\ \Gammai. \label{1.3}
\end{equation}
Further, we consider the homogeneous Dirichlet boundary condition
\begin{alignat}{3}
\bfu\ &=\ \bfzero && \mbox{on}\ \Gammaw, \label{1.4} \\
\noalign{\vskip 4pt \noindent the condition of periodicity on
$\Gammam$ and $\Gammap$ \vskip 5pt}
\bfu(x_1,x_2+\tau)\ &=\ \bfu(x_1,x_2) \qquad && \mbox{for}\
\bfx\equiv(x_1,x_2)\in\Gammam \label{1.5} \\
\noalign{\vskip 5pt \noindent and the artificial boundary
condition \vskip 4pt}
-\nu\, \frac{\partial\bfu}{\partial\bfn}+p\br\bfn\ &=\ \bfh &&
\mbox{on}\ \Gammao, \label{1.6}
\end{alignat}
where $\bfh$ is a given vector--function on $\Gammao$ and $\bfn$
denotes the unit outer normal vector, which is equal to
$\bfe_1\equiv(1,0)$ on $\Gammao$. The boundary condition
(\ref{1.6}) (with $\bfh=\bfzero$) is often called the ``do
nothing'' condition, because it naturally follows from an
appropriate weak formulation of the boundary--value problem, see
\cite{Glow} and \cite{HeRaTu}.

\vspace{4pt} \noindent
{\bf On the results of this paper.} \ The main results of the
paper are theorems on properties of so called weak Stokes operator
(Theorem \ref{T1}) and on the existence, uniqueness and continuous
dependence on the data of a weak solution $\bfu$ to the Stokes
problem (\ref{1.1})--(\ref{1.6}) in the $L^r$--setting (Theorem
\ref{T3}). The weak solution is defined in Definition \ref{D1}.
Theorem \ref{T2} provides the existence of an associated pressure
$p$ and explains the sense, in which $\bfu$ and $p$ satisfy the
boundary condition (\ref{1.6}). These results do not follow from
the previous cited papers on the Stokes problem, mainly because we
consider three different types of boundary conditions on
$\partial\Omega$, two of which ``meet'' in the ``corner points''
$\Am$, $\Ap$, $\Bm$ and $\Bp$ of domain $\Omega$. Moreover, while
the corresponding $L^2$--theory is relatively simple (see
\cite{TNe4}), the general $L^r$--case is much more difficult. The
key inequality (\ref{3.4}) is proven in Section \ref{S5}. The
crucial estimate of the $W^{1,r}$--norm of the velocity in the
neighborhood of $\Gammao$ is obtained, applying results of
S.~Agmon, A.~Douglis and L.~Nirenberg \cite{AgDoNi} on general
elliptic systems. As an auxiliary result, we present Lemma
\ref{L2} on an appropriate extension of the velocity profile
$\bfg$ from $\Gammai$ to $\Omega$.

\vspace{4pt} \noindent
{\bf On some previous related results.} \ The Stokes problem, in
various domains and with various boundary conditions, has already
been studied in many papers and books. As to weak solutions, the
$L^2$--existential theory and the proof of uniqueness of an
existing weak solution is relatively simple, because one works in
a Hilbert space and can apply the Riesz theorem. However, the
corresponding $L^r$--theory for a general $r\in(1,\infty)$ is much
more difficult. Nevertheless, also in the $L^r$--setup, results on
the existence and uniqueness of weak solutions of the Stokes
problem with Dirichlet's boundary condition for the velocity can
be found e.g.~in \cite{Ca}, \cite{Te} and \cite{Ga}, with Navier's
boundary condition in \cite{AcAmCoGh} and with the Navier--type
boundary condition in \cite{AmSe}. Fundamental estimates have been
basically obtained by means of the Stokes fundamental solution and
the corresponding Green tensor in \cite{Ca} (in 3D) and \cite{Ga},
respectively also the inf--sup condition in \cite{AcAmCoGh} and
\cite{AmSe}. The 2D case (with Dirichlet's boundary condition and
in a smooth domain) has also been solved in \cite{Te} by
expressing the velocity by means of a stream function, application
of operator $\nabla^{\perp}$ to the Stokes equation and the
results on the biharmonic boundary--value problem.

In studies of the Navier--Stokes equations in channels or profile
cascades with artificial boundary conditions on the outflow, many
authors use various modifications of condition (\ref{1.6}). (See
e.g.~\cite{BrFa}), \cite{FeNe1}--\cite{FeNe3} and
\cite{TNe1}--\cite{TNe3}.) The reason is that condition
(\ref{1.6}) does not enable one to control the amount of kinetic
energy in $\Omega$ in the case of a reverse flow on $\Gammao$.
Hence modifications are suggested so that one can derive an energy
inequality, and consequently prove the existence of weak
solutions. In papers \cite{KuSka} and \cite{Ku}, the authors use
the boundary condition on the outflow in connection with a flow in
a channel, and they prove the existence of weak solutions  of the
Navier--Stokes equations for ``small data''. Possible reverse
flows on the ``outflow'' of a channel are controlled by means of
additional conditions in \cite{KraNe1}, \cite{KraNe2},
\cite{KraNe3}, where the Navier--Stokes equations are replaced by
the Navier--Stokes variational inequalities.

The regularity up to the boundary of existing weak solutions
(stationary or time--depen\-dent) to the Navier--Stokes equations
with the boundary condition (\ref{1.6}) on a part of the boundary
has not been studied in literature yet. This is mainly because one
at first needs a deeper information on existence, uniqueness and
regularity in the $L^r$--framework for the corresponding steady
Stokes problem. There are, to our knowledge, only two papers which
bring an information on regularity of a solution of this Stokes
problem: 1) paper \cite{KuBe}, where the authors studied a flow in
a 2D channel $D$ of a special geometry, considering the
homogeneous Dirichlet boundary condition on the walls and
condition (\ref{1.6}) on the outflow, and proved that the velocity
is in $\bfW^{2-\beta,2}(D)$ for certain $\beta\in(0,1)$, provided
that $\bff\in\bfL^2(D)$ (see \cite[Theorem 2.1]{KuBe}), and 2)
paper \cite{TNe4}, where the belonging of the solution $(\bfu,p)$
of the Stokes problem (\ref{1.1})--(\ref{1.6}) to
$\bfW^{2,2}(\Omega)\times W^{1,2}(\Omega)$ has been recently
proven, under natural assumptions on $\bff$, $\bfg$ and $\bfh$.

%======================================================================
\section{Notation and some preliminary results} \label{S2}

{\bf Notation.} \ Recall that $\Omega$ is a Lipschitzian domain in
$\R^2$, sketched on Fig.~1. Its boundary consists of the curves
$\Gammai$, $\Gammao$, $\Gammam$, $\Gammap$ and $\Gammaw$,
described in Section \ref{S1}. We denote by $\bfn=(n_1,n_2)$ the
outer normal vector field on $\partial\Omega$. We use $c$ as a
generic constant, i.e.~a constant whose values may change
throughout the text.

\begin{list}{$\circ$}
{\setlength{\topsep 0.5mm}
\setlength{\itemsep 0.5mm}
\setlength{\leftmargin 14pt}
\setlength{\rightmargin 0pt}
\setlength{\labelwidth 6pt}}

\item
$\Gammai^0$, respectively $\Gammao^0$, denotes the open line
segment without the end points $\Am,\, \Ap$, respectively $\Bm,\,
\Bp$. Similarly, $\Gammam^0$, respectively $\Gammap^0$ denotes the
curve $\Gammam$, respectively $\Gammap$, without the end points
$\Am$, $\Bm$, respectively $\Ap$, $\Bp$.

\item
We denote vector functions and spaces of vector functions by
boldface letters. If $\bfv\equiv(v_1,v_2)$ is a vector function
then $\nabla\bfv$ is a tensor function with the entries $\partial
v_i/\partial x_j\equiv\partial_jv_i$ on the positions $ij$ (for
$i,j=1,2$). Spaces of 2nd--order tensor functions are denoted by
the superscript $2\times 2$.

\item
We denote by $\|\, .\, \|_r$ the norm in $L^r(\Omega)$ or in
$\bfL^r(\Omega)$ or in $L^r(\Omega)^{2\times 2}$. Similarly, $\|\,
.\, \|_{s,r}$ is the norm in $W^{s,r}(\Omega)$ or in
$\bfW^{s,r}(\Omega)$ or in $W^{s,r}(\Omega)^{2\times 2}$.

\item
$\cCs$ denotes the linear space of all infinitely differentiable
divergence--free vector functions in $\overline{\Omega}$, whose
support is disjoint with $\Gammai\cup\Gammaw$ and that satisfy,
together with all their derivatives (of all orders), the condition
of periodicity (\ref{1.5}). Note that each $\bfw\in\cCs$
automatically satisfies the outflow condition
$\int_{\Gammao}\bfw\cdot\bfn\; \rmd l=0$.

\item
We denote by $\cCns$ the intersection $\cCs\cap\cCn$, where $\cCn$
is the space of all infinitely differentiable vector functions in
$\Omega$ with a compact support in $\Omega$.

\item
$\Vs{r}$ (for $r>1$) is the closure of $\cCs$ in
$\bfW^{1,r}(\Omega)$. The space $\Vs{r}$ can be characterized as a
space of divergence--free vector functions from
$\bfW^{1,r}(\Omega)$, whose traces on $\Gammai\cup\Gammaw$ are
equal to zero and the traces on $\Gammam$ and $\Gammap$ satisfy
the condition of periodicity (\ref{1.5}). Note that as functions
from $\Vs{r}$ are equal to zero on $\Gammai\cup\Gammaw$ (in the
sense of traces) and the domain $\Omega$ is bounded, the norm in
$\Vs{r}$ is equivalent to $\|\nabla.\, \|_r$.

\item
We denote by $r'$ the conjugate exponent to $r$, by
$\bfW^{-1,r}(\Omega)$ the dual space to $\bfW^{1,r'}_0(\Omega)$
and by $\bfW^{-1,r}_0(\Omega)$ the dual space to
$\bfW^{1,r'}(\Omega)$. The corresponding norms are denoted by
$\|\, .\, \|_{\bfW^{-1,r}}$ and $\|\, .\, \|_{\bfW^{-1,r}_0}$,
respectively.

\item
$\Vsd{r}$ denotes the dual space to $\Vs{r'}$. The duality pairing
between $\Vsd{r}$ and $\Vs{r'}$ is denoted by $\langle\, .\, ,\,
.\, \rangle_{(\vsd{r},\vs{r'})}$. The norm in $\Vsd{r}$ is denoted
by $\|\, .\, \|_{\vsd{r}}$.

\item
Denote by $\cA_r$ the linear mapping of $\Vs{r}$ to $\Vsd{r}$,
defined by the equation
\begin{equation}
\blangle\cA_r\bfv,\bfw\brangle_{(\vsd{r},\vs{r'})}\ :=\
(\nabla\bfv,\nabla\bfw) \qquad \mbox{for}\ \bfv\in\Vs{r}\
\mbox{and}\ \bfw\in\Vs{r'}, \label{2.1}
\end{equation}
where $(\nabla\bfv,\nabla\bfw)$ represents the integral
$\int_{\Omega}\nabla\bfv:\nabla\bfw\; \rmd\bfx$. We call operator
$\cA_r$ the {\it weak Stokes operator.}

\end{list}

\begin{lemma} \label{L4}
Let $\bff\in\bfW^{-1,r}_0(\Omega)$. Then there exists $\bbF\in
L^r(\Omega)^{2\times 2}$, satisfying $\div\bbF=\bff$ in the sense
of distributions in $\Omega$ and
\begin{equation}
\|\bbF\|_r\ \leq\ c\, \|\bff\|_{\bfW^{-1,r}_0}\, , \label{2.2}
\end{equation}
where $c$ is independent of $\bff$ and $\bbF$.
\end{lemma}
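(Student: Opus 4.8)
The statement is the classical fact that the divergence operator from $L^r(\Omega)^{2\times 2}$ onto $\bfW^{-1,r}_0(\Omega)$ (the dual of $\bfW^{1,r'}(\Omega)$, i.e.~functionals that see \emph{all} test functions, not just those vanishing on the boundary) admits a bounded right inverse. The plan is to deduce this by duality from a gradient/Poincar\'e-type estimate combined with the surjectivity of the divergence, rather than by constructing $\bbF$ by hand.

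First I would reduce the problem to a statement about the gradient. Consider the operator $G\colon W^{1,r'}(\Omega)\to L^{r'}(\Omega)^{2}$, $\varphi\mapsto\nabla\varphi$ (componentwise on a vector field, but it suffices to argue scalar-component-wise since $\bfW^{-1,r}_0$ is a product). Because $\Omega$ is a bounded Lipschitz domain, $G$ has closed range: indeed $W^{1,r'}(\Omega)/\R$ is a Banach space on which $\|\nabla\varphi\|_{r'}$ is an equivalent norm, by the Ne\v{c}as / generalized Poincar\'e inequality on Lipschitz domains. Hence $G$ is a topological isomorphism of $W^{1,r'}(\Omega)/\R$ onto the closed subspace $\nabla W^{1,r'}(\Omega)\subset L^{r'}(\Omega)^2$.

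Next I would dualize. A functional $\bff\in\bfW^{-1,r}_0(\Omega)$, restricted to the quotient, is well defined on $W^{1,r'}(\Omega)/\R$ precisely when $\langle\bff,1\rangle=0$; but we may always subtract from $\bff$ a suitable constant-multiplied functional so that this holds, at the cost of a controlled correction $\div(\bbF_0)$ with $\bbF_0$ a fixed smooth tensor (e.g.~$\bbF_0=\frac{1}{|\Omega|}\langle\bff,1\rangle\,x\otimes\mathbf{e}$, whose divergence is the constant functional) — this contributes an $L^r$ norm bounded by $c\|\bff\|_{\bfW^{-1,r}_0}$. Thus we may assume $\bff$ annihilates constants. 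Then $\bff\circ G^{-1}$ is a bounded functional on $\nabla W^{1,r'}(\Omega)$ with norm $\le c\|\bff\|_{\bfW^{-1,r}_0}$; by Hahn--Banach extend it to all of $L^{r'}(\Omega)^2$ without increasing the norm, and represent it, via $(L^{r'})^*=L^r$, by some $-\bbF\in L^r(\Omega)^2$ (arranged componentwise into the tensor), with $\|\bbF\|_r\le c\|\bff\|_{\bfW^{-1,r}_0}$. Unwinding the definitions gives, for every $\varphi\in W^{1,r'}(\Omega)$,
\[
\langle\bff,\varphi\rangle\ =\ -\int_\Omega \bbF:\nabla\varphi\;\rmd\bfx,
\]
which is exactly $\div\bbF=\bff$ in the sense of distributions (and more, since it holds against all of $W^{1,r'}$), together with the bound (\ref{2.2}) after adding back $\bbF_0$.

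The main obstacle is the closed-range step, i.e.~the Ne\v{c}as inequality $\|\varphi-\overline\varphi\|_{r'}\le c\|\nabla\varphi\|_{r'}$ on a bounded Lipschitz domain for general $r'\in(1,\infty)$; this is where the Lipschitz regularity of $\Omega$ is essential and is the only non-elementary input. It is standard (it follows, e.g., from the fact that $\div$ maps $\bfW^{1,r'}_0(\Omega)$ onto $\{g\in L^{r'}(\Omega):\int_\Omega g=0\}$, the Bogovski\u{\i} operator being bounded on Lipschitz domains, or directly from Ne\v{c}as' lemma on the characterization of $H^{-1}$-type spaces), so I would cite it rather than reprove it. Everything else is soft functional analysis (quotient spaces, Hahn--Banach, $L^p$-duality) and the bookkeeping of the constant correction $\bbF_0$.
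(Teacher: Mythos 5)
Your argument is correct and proves the lemma as stated, but by a genuinely different route than the paper. The paper constructs $\bbF$ via the Geissert--Heck--Hieber extension of the Bogovskii operator to $W^{-1,r}_0$ on star-shaped domains, corrects the resulting divergence defect $\omega\,\langle f,1\rangle$ by explicit fields $\bfz_1,\bfz_2$, and then patches over a finite star-shaped covering of $\Omega$; this yields a bounded \emph{linear} solution operator $\bff\mapsto\bbF$. Your route is the classical negative-norm representation: Poincar\'e--Wirtinger shows that $\nabla$ is an isomorphism of $W^{1,r'}(\Omega)/\R$ onto a closed subspace of $L^{r'}(\Omega)^2$, so a mean-zero functional transfers to that subspace, is extended by Hahn--Banach, and is represented by $L^{r'}$--$L^r$ duality. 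This is softer and shorter --- the only analytic input is Poincar\'e--Wirtinger, i.e.\ Rellich compactness on a bounded connected Lipschitz domain, so you do not even need the deeper Ne\v{c}as inequality or the Bogovskii surjectivity you invoke for the closed-range step --- at the price of losing linearity and any explicit structure of $\bbF$; neither is required by the statement.

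One caveat on your closing sentence: the identity $\langle\bff,\varphi\rangle=-\int_\Omega\bbF:\nabla\varphi\;\rmd\bfx$ for \emph{all} $\varphi\in W^{1,r'}(\Omega)$ cannot survive ``adding back $\bbF_0$'': any functional of that form annihilates constants (since $\nabla 1=\bfzero$), whereas $\langle\bff,1\rangle$ need not vanish. Your correction $\bbF_0$ satisfies $\div\bbF_0=\bfc$ only in the sense of distributions (integration by parts against test functions with nonzero trace produces boundary terms), so for the full $\bff$ you recover exactly the distributional identity $\div\bbF=\bff$ claimed in the lemma --- which is all that is asserted --- but not the stronger duality against the whole of $W^{1,r'}(\Omega)$.
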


\begin{proof}
The proof is based on the results from paper \cite{GeHeHi} by
M.~Geussert, H.~Heck and M.~Hieber. If domain $\Omega'\subset\R^3$
is bounded and star-shaped with respect to some ball
$K\subset\Omega'$ and $\omega$ is a function in $C^{\infty}_0(K)$,
such that $\int_{K}\omega\, \rmd\bfx=1$ then it follows from
Proposition 2.1 in \cite{GeHeHi} that there is a bounded linear
operator $\gB:W^{-1,r}_0(\Omega')\to\bfL^r(\Omega')$, such that
$\div\gB f=f-\omega\int_{\Omega'}f\, \rmd\bfx$ for $f\in
L^r(\Omega')$. Applying an appropriate limit procedure, one can
show that if $f\in W^{-1,r}_0(\Omega')$ then
\begin{displaymath}
\div\gB f\ =\ f-\omega\, \langle f,1\rangle_{(W^{-1,r}_0(\Omega'),
W^{1,r'}(\Omega'))}
\end{displaymath}
in the sense of distributions. Then $\div[\gB f+\bfz]=f-\omega\,
\langle f,1\rangle_{(W^{-1,r}_0(\Omega'),W^{1,r'}(\Omega'))}+
\div\bfz$ for $\bfz\in \bfL^r(\Omega')$. Let us choose $\bfz$ so
that $\bfz=\bfz_1+\bfz_2$, where
\begin{equation}
\div\bfz_1\ =\ (\omega-\overline{\omega})\, \langle
f,1\rangle_{(W^{-1,r}_0(\Omega'),W^{1,r'} (\Omega'))}, \label{2.3}
\end{equation}
and $\bfz_2=\frac{1}{3}\bfx\, \overline{\omega}\, \langle
f,1\rangle_{(W^{-1,r}_0(\Omega'), W^{1,r'}(\Omega'))}$. (We denote
by $\overline{\omega}$ the mean value of $\omega$ in $\Omega'$.)
As the mean value of the right hand side of (\ref{2.3}) in
$\Omega'$ is zero, the equation (\ref{2.3}) is solvable in
$W^{1,r}_0(\Omega')$ due to \cite[Theorem 2.5]{GeHeHi}. Moreover,
\begin{displaymath}
\|\bfz_1\|_{1,r;\, \Omega'}\ \leq\ c\,
\|\omega-\overline{\omega}\|_{r;\, \Omega'}\, \bigl| \langle
f,1\rangle_{(W^{-1,r}_0(\Omega'),W^{1,r'} (\Omega'))} \bigr|\
\leq\ c\, \|f\|_{W^{-1,r}_0(\Omega)},
\end{displaymath}
where $c$ depends only on $\Omega'$, $\omega$ and $r$.
Furthermore,
\begin{displaymath}
\div\bfz_2=\overline{\omega}\, \langle
f,1\rangle_{(W^{-1,r}_0(\Omega'), W^{1,r'}(\Omega'))} \qquad
\mbox{and} \qquad \|\bfz_2\|_{1,r;\, \Omega'}\ \leq\ c\,
\|f\|_{W^{-1,r}_0(\Omega)},
\end{displaymath}
Thus, the function $\gB f+\bfz$ satisfies the equation $\div[\gB
f+\bfz]=f$ in the sense of distributions in $\Omega'$ and $\|\gB
f+\bfz\|_{r;\, \Omega'}\leq c\, \|f\|_{W^{-1,r}_0(\Omega')}$.

Domain $\Omega$ can be expressed as a finite union of star-shaped
domains. This enables us to carry over these results to the whole
domain $\Omega$, applying the same arguments as in \cite{GeHeHi},
pp.~116--117. Thus, we can formulate the proposition: {\it there
exists a bounded linear operator
$\widetilde{\gB}:W^{-1,r}_0(\Omega)\to\bfL^r(\Omega)$, such that
$\div\widetilde{\gB}f=f$ in the sense of distributions in $\Omega$
for $f\in W^{-1,r}_0(\Omega)$.} Now, it is just a technical step
to extend this proposition from $f\in W^{-1,r}_0(\Omega)$ to
$\bff\in\bfW^{-1,r}_0(\Omega)$. This completes the proof.
\end{proof}

\begin{remark} \label{R1} \rm
As there is not a complete coincidence on the definition of the
divergence of a tensor field in literature, note that if
$\bbF=(F_{ij})$ ($i,j=1,2$) then $\div\bbF$ in Lemma \ref{L4} (and
also further on throughout the paper) denotes the vector
$\partial_jF_{ij}$ ($i=1,2$). In accordance with this notation,
$\div\nabla\bfv$ is the vector with the entries
$\partial_j(\partial_jv_i)=\Delta v_i$ ($i=1,2$).

One can also assume that $\bff\in\bfW^{-1,r}(\Omega)$ (instead of
$\bff\in\bfW^{-1,r}_0(\Omega)$) in Lemma \ref{L4}. However, in
this case, one cannot apply \cite{GeHeHi} in order to obtain the
tensorial function $\bbF\in L^r(\Omega)^{2\times 2}$ with the
properties stated in the lemma. Nevertheless, the existence of
$\bbF$, satisfying the equation $\div\bbF=\bff$ (in the sense of
distributions) and the estimate $\|\bbF\|_r\leq c\,
\|\bff\|_{\bfW^{-1,r}}$ can be proven in this case, too, just
appropriately modifying the proof of Lemma II.1.6.1 in \cite{So},
which concerns the case $r=2$.
\end{remark}

\section{The weak Stokes operator} \label{S3}

The next theorem is an analogue of results, known on the Stokes
problem with the homogeneous Dirichlet or Navier or Navier--type
boundary conditions on the whole boundary of domain $\Omega$, see
\cite{Ga}, \cite{AcAmCoGh} and \cite{AmSe}. Recall that the
theorem is non-trivial especially due to the variety of used
boundary conditions and the fact that one cannot apply Riesz'
theorem in the general $L^r$--framework in order to establish the
existence and uniqueness of a solution of the equation
$\cA_r\bfv=\bff$ for $\bff\in\Vsd{r}$.

\begin{theorem}[on the weak Stokes operator $\cA_r$] \label{T1}
The weak Stokes operator $\cA_r$ is a boun\-ded, closed and
injective operator from $\Vs{r}$ to $\Vsd{r}$ with
$D(\cA_r)=\Vs{r}$ and $R(\cA_r)=\Vsd{r}$. The adjoint operator to
$\cA_r$ is $\cA_r^*=\cA_{r'}$.
\end{theorem}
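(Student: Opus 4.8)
\medskip
\noindent\textbf{Proof proposal.} \ The plan is to treat the five listed properties in a logical order, deducing most of them from a single key a~priori estimate of the form
\begin{equation}
\|\nabla\bfv\|_r\ \leq\ c\, \|\cA_r\bfv\|_{\vsd{r}} \qquad \mbox{for all}\ \bfv\in\Vs{r}, \label{plan-est}
\end{equation}
which I expect to be exactly the inequality (\ref{3.4}) referenced in the introduction as being proven in Section \ref{S5}. Boundedness of $\cA_r$ is immediate from the defining identity (\ref{2.1}) and Hölder's inequality, since $|(\nabla\bfv,\nabla\bfw)|\leq\|\nabla\bfv\|_r\|\nabla\bfw\|_{r'}$ and the $\Vs{r'}$--norm is equivalent to $\|\nabla\,.\,\|_{r'}$; thus $D(\cA_r)=\Vs{r}$ trivially and $\|\cA_r\bfv\|_{\vsd{r}}\leq c\|\nabla\bfv\|_r$. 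Combined with (\ref{plan-est}), this shows $\cA_r$ is an isomorphism onto its range with both $\cA_r$ and $\cA_r^{-1}$ bounded; in particular $\cA_r$ is injective and has closed range, and being a bounded everywhere-defined operator it is automatically closed.

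\medskip
\noindent The remaining substantive point is surjectivity, $R(\cA_r)=\Vsd{r}$. First I would identify the adjoint: for $\bfv\in\Vs{r}$, $\bfw\in\Vs{r'}$ the identity $\langle\cA_r\bfv,\bfw\rangle=(\nabla\bfv,\nabla\bfw)=(\nabla\bfw,\nabla\bfv)=\langle\cA_{r'}\bfw,\bfv\rangle$ shows $\cA_r^*=\cA_{r'}$ directly from the definition of the Banach-space adjoint (using reflexivity of $\Vs{r}$ to avoid bidual subtleties). Now apply (\ref{plan-est}) with $r$ replaced by $r'$: $\cA_{r'}$ is also injective with closed range. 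By the closed range theorem, $R(\cA_r)$ is closed and equals the annihilator of $\ker\cA_r^*=\ker\cA_{r'}=\{\bfzero\}$, hence $R(\cA_r)=\Vsd{r}$. (Equivalently: $\cA_{r'}$ injective with closed range $\Rightarrow$ bounded below $\Rightarrow$ $\cA_r=\cA_{r'}^*$ surjective.)

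\medskip
\noindent The main obstacle is establishing (\ref{plan-est}), which the paper evidently defers to Section \ref{S5}; granting it, everything else is soft functional analysis. In executing the surjectivity step I would be careful that the closed-range/adjoint machinery is being applied in the reflexive Banach spaces $\Vs{r}$ (for $1<r<\infty$ these are reflexive as closed subspaces of $\bfW^{1,r}(\Omega)$), so that $\cA_r^{**}=\cA_r$ and the duality between "injective with closed range" and "surjective" goes through cleanly in both directions. One should also double-check that the norm equivalences for $\Vs{r}$ and $\Vs{r'}$ (Poincaré-type, valid because traces vanish on $\Gammai\cup\Gammaw$ and $\Omega$ is bounded) are uniform enough that the constant $c$ in (\ref{plan-est}) transfers to a genuine lower bound on $\cA_r$ in the $\vsd{r}$--norm as defined.
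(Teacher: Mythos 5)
There is a genuine gap, and it sits exactly where you placed all the weight: your key estimate $\|\nabla\bfv\|_r\leq c\,\|\cA_r\bfv\|_{\vsd{r}}$ for all $\bfv\in\Vs{r}$ is \emph{not} the paper's inequality (\ref{3.4}), and nothing in the paper proves it directly. Inequality (\ref{3.4}) reads $\|\bfv\|_{1,r}\leq c\,\|\bbF\|_r$; it holds only for $r>2$, and only for the particular $\bfv$ that solves the \emph{$L^2$}--problem $\cA_2\bfv=\bfF$ with a smooth, periodic, divergence--form datum $\bbF\in W^{1,2}(\Omega)^{2\times 2}$, the right--hand side being the $L^r$--norm of the representing tensor rather than the dual norm of $\cA_r\bfv$. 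Your estimate, asserted for every $\bfv\in\Vs{r}$ and every $r\in(1,\infty)$, is — by exactly the closed--range duality you invoke — equivalent to the surjectivity of $\cA_{r'}:\Vs{r'}\to\Vsd{r'}$ (and trivially implies injectivity of $\cA_r$); that is, it is essentially the theorem itself and is true only a posteriori, once $\cA_r$ is known to be a bijection with bounded inverse. The reduction is therefore circular unless you supply an independent proof of this inf--sup bound, which you do not.

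What is missing is the existence mechanism that bridges (\ref{3.4}) and surjectivity. The paper supplies it in parts (b)--(d) of its proof: (i) represent an arbitrary $\bff\in\Vsd{r}$ as $\div\bbF$ with $\bbF\in L^r(\Omega)^{2\times 2}$ and $\|\bbF\|_r\leq c\,\|\bff\|_{\vsd{r}}$, via Hahn--Banach and the negative--order Bogovskii result of Lemma \ref{L4}; (ii) approximate $\bbF$ in $L^r$ by smooth periodic tensors $\bbF_n$, solve the $L^2$--problem for each, use the uniform bound (\ref{3.4}) on $\|\bfv_n\|_{1,r}$, and extract a weak limit solving $\cA_r\bfv=\bff$ — this yields $R(\cA_r)=\Vsd{r}$ for $r>2$ only; (iii) treat $1<r<2$ purely by duality from the $r'>2$ case ($N(\cA_r)=R(\cA_{r'})^{\perp}=\{\bfzero\}$, boundedness of $\cA_{r'}^{-1}$ via the closed graph theorem, hence closed and dense range of $\cA_r$). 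Your soft steps — boundedness, closedness, the identification $\cA_r^*=\cA_{r'}$, and the closed--range argument for the small--$r$ case — are correct and coincide with parts (a), (e), (f), (g) of the paper's proof; but without (i) and (ii) you have no route to surjectivity for $r>2$, and hence no starting point for the duality that is supposed to deliver the remaining exponents.
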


\begin{proof}
The case $r=2$ is proven in \cite{TNe4}. Thus, assume that
$r\not=2$. We split the proof to several parts.

(a) \ Denote by $\|\cA_r\|_{\vs{r}\to\vsd{r}}$ the norm of
operator $\cA_r$. Since
\begin{align*}
\|\cA_r\|_{\vs{r}\to\vsd{r}}\ &=\ \sup_{\bfv\in\Vs{r},\
\bfv\not=\bfzero}\ \frac{\|\cA_r\bfv\|_{\vsd{r}}}{\|\bfv\|_{1,r}}\
\leq\ c\ \sup_{\bfv\in\Vs{r},\ \bfv\not=\bfzero}\
\frac{\|\cA_r\bfv\|_{\vsd{r}}}{\|\nabla\bfv\|_r} \\
&=\ c\ \sup_{\bfv\in\Vs{r},\ \bfv\not=\bfzero}\
\sup_{\bfw\in\Vs{r'},\ \bfw\not=\bfzero}\
\frac{|(\nabla\bfv,\nabla\bfw)|}{\|\nabla\bfv\|_r\,
\|\nabla\bfw\|_{r'}}\ \leq\ c,
\end{align*}
the operator $\cA_r$ is bounded. The identity $D(\cA_r)=\Vs{r}$
follows from the definition of $\cA_r$. Operator $\cA_r$ is
closed, as a bounded linear operator, defined on the whole space
$\Vs{r}$.

(b) \ In this part, we consider $\bfF\in\Vsd{2}$ of a special form
and deal with the equation $\cA_2\bfv=\bfF$. Concretely, we assume
that $\bbF\in W^{1,2}(\Omega)^{2\times 2}$ satisfies the condition
\begin{equation}
\bbF(x_1,x_2+\tau)\ =\ \bbF(x_1,x_2) \qquad \mbox{for a.a.}\
(x_1,x_2)\in\Gammam \label{3.1}
\end{equation}
and define $\bfF\in\Vsd{2}$ by the formula
\vspace{-4pt}
\begin{equation}
\blangle\bfF,\bfw\brangle_{(\vsd{2},\vs{2})}\ :=\
-\int_{\Omega}\bbF:\nabla\bfw\; \rmd\bfx \label{3.2}
\end{equation}
for all $\bfw\in\Vs{2}$. Then, due to \cite[Lemma 1 and Theorem
2]{TNe4}, the equation $\cA_2\bfv=\bfF$ has a unique solution
$\bfv\in\Vs{2}\cap\bfW^{2,2}(\Omega)$, there exists $p\in
W^{1,2}(\Omega)$ (an associated pressure), such that
$\nabla\bfv+p\br\bbI-\bbF\in W^{1,2}(\Omega)^{2\times 2}$, and
$\bfv$, $p$ satisfy the equation
\begin{equation}
-\Delta\bfv+\nabla p-\div\bbF\ \equiv\
\div(-\nu\nabla\bfv+p\br\bbI-\bbF)\ =\ \bfzero \label{3.3}
\end{equation}
a.e.~in $\Omega$. If $r>2$ then there exists $\cn01>0$,
independent of $\bfv$ and $\bbF$, such that
\begin{equation}
\|\bfv\|_{1,r}\ \leq\ \cc01\, \|\bbF\|_r. \label{3.4}
\end{equation}
This estimate is the key part of the proof of Theorem \ref{T1}. As
the proof of (\ref{3.4}) is relatively long, the used technique
differs from other sections, and in order not to disrupt the
logical sequence of the text, we postpone the derivation of
(\ref{3.4}) to a separate section (Section \ref{S5}).

(c) \ In this part, we assume that $r>2$, $\bbF\in
L^r(\Omega)^{2\times 2}$ and the functional $\bfF\in\Vsd{r}$ is
defined by the same formula as (\ref{3.2}) (for all
$\bfw\in\Vs{r'}$). We prove the solvability of the equation
$\cA_r\bfv=\bfF$.

There exists a sequence $\{\bbF_n\}$ in $W^{1,2}(\Omega)^{2\times
2}$, satisfying (\ref{3.1}), such that $\bbF_n\to\bbF$ in
$L^r(\Omega)^{2\times 2}$. Let $\{\bfF_n\}$ be a sequence of
functionals from $\Vsd{r}$, related to $\bbF_n$ through formula
(\ref{3.2}), which is now valid for all $\bfw\in\Vs{r'}$. The
functionals $\bfF$ and $\bbF_n$ satisfy
\vspace{-10pt}
\begin{equation}
\|\bfF_n-\bfF\|_{\vsd{r}}\ =\ \sup_{\bfw\in\cCs;\
\bfw\not=\bfzero}\ \|\bfw\|_{1,r'}^{-1}\
\biggl|\int_{\Omega}(\bbF_n-\bbF):\nabla\bfw\; \rmd\bfx\biggr|\
\leq\ c\, \|\bbF_n-\bbF\|_r. \label{3.5}
\end{equation}
Since $\Vsd{r}\subset\Vsd{2}$, there exists (due to part (b) of
this proof) a sequence $\{\bfv_n\}$ in
$\Vs{2}\cap\bfW^{2,2}(\Omega)$, such that $\cA_2\bfv_n=\bfF_n$ and
\begin{equation}
\|\bfv_n\|_{1,r}\ \leq\ \cc01\, \|\bbF_n\|_r. \label{3.6}
\end{equation}
As the space $\Vs{r}$ is reflexive, there exists a subsequence
(which we again denote by $\{\bfv_n\}$), such that
$\bfv_n\rightharpoonup\bfv$ in $\Vs{r}$. The functions $\bfv_n$
satisfy
\begin{displaymath}
(\nabla\bfv_n,\nabla\bfw)\ =\ \langle\bfF_n,\bfw\rangle_{(\vsd{2},
\vs{2})} \qquad \mbox{for all}\ \bfw\in\Vs{2}.
\end{displaymath}
If $\bfw\in\Vs{r'}$ then the right hand side equals
$\langle\bfF_n,\bfw\rangle_{(\vsd{r}, \vs{r'})}$. Thus, as
$\Vs{2}$ is dense in $\Vs{r'}$, we also have
\begin{displaymath}
(\nabla\bfv_n,\nabla\bfw)\ =\ \langle\bfF_n,\bfw\rangle_{(\vsd{r},
\vs{r'})} \qquad \mbox{for all}\ \bfw\in\Vs{r'}.
\end{displaymath}
The limit transition for $n\to\infty$ yields
\begin{displaymath}
(\nabla\bfv,\nabla\bfw)\ =\ \langle\bfF,\bfw\rangle_{(\vsd{r},
\vs{r'})} \qquad \mbox{for all}\ \bfw\in\Vs{r'},
\end{displaymath}
which means that $\cA_r\bfv=\bfF$. It also follows from the limit
transition that the function $\bfv$ satisfies inequality
(\ref{3.4}).

(d) \ Here, we show that each functional $\bff\in\Vsd{r}$ (for any
$r\in(1,\infty)$) can be expressed in the form (\ref{3.2}) for
some appropriate $\bbF\in L^r(\Omega)^{2\times 2}$. This, together
with part (c), shows that if $r>2$ and $\bff\in\Vsd{r}$ then the
equation $\cA_r\bfv=\bff$ is solvable. In other words,
$R(\cA_r)=\Vsd{r}$.

Thus, let $\bff\in\Vsd{r}$. As $\Vs{r'}$ is a closed subspace of
$\bfW^{1,r'}(\Omega)$, the functional $\bff$ can be extended (by
the Hahn--Banach theorem) to a bounded linear functional
$\bff_*\in\bfW^{-1,r}_0(\Omega)$, such that
$\|\bff_*\|_{\bfW^{-1,r}_0}=\|\bff\|_{\vsd{r}}$. There exists (by
Lemma \ref{L4}) $\bbF\in L^r(\Omega)^{2\times 2}$, such that
$\div\bbF=\bff_*$ in the sense of distributions in $\Omega$ and
$\|\bbF\|_r\leq c\,
\|\bff_*\|_{\bfW^{-1,r}_0}\equiv\|\bff\|_{\vsd{r}}$. As
$\bff_*=\bff$ on $\Vs{r'}$, we have
\vspace{-10pt}
\begin{displaymath}
\llangle \div\bbF,\bfw \rrangle\ =\ \llangle \bff,\bfw \rrangle
\qquad \forall\, \bfw\in\cCns,
\end{displaymath}
where $\llangle\, .\, ,\, .\, \rrangle$ denotes the action of a
distribution on a function from $\cCn$. Since the left hand side
is equal to
\vspace{-8pt}
\begin{displaymath}
-\llangle\bbF,\nabla\bfw\rrangle\ =\
-\int_{\Omega}\bbF:\nabla\bfw\; \rmd\bfx,
\end{displaymath}

\vspace{-8pt} \noindent
we obtain
\vspace{-6pt}
\begin{displaymath}
\hspace{20pt} \langle\bff,\bfw\rangle_{(\vsd{r},\vs{r'})}\ =\
\llangle\bff, \bfw\rrangle\ =\ -\int_{\Omega}\bbF:\nabla\bfw\;
\rmd\bfx \qquad \mbox{for all}\ \bfw\in\cCns.
\end{displaymath}
Both the left hand-- and right hand--sides can be continuously
extended so that they equal each other for all $\bfw\in\Vs{r'}$.
This means that $\bff=\bfF$, where $\bfF$ is related to $\bbF$
through formula (\ref{3.2}) (where we consider $\bfw\in\Vs{r'}$).
It follows from part (c) that there exists $\bfv\in\Vs{r}$, such
that $\cA_r\bfv=\bff$. There exists $c>0$, independent of $\bff$
and $\bfv$, such that the solution satisfies the estimate
\begin{equation}
\|\bfv\|_{1,r}\ \leq\ c\, \|\bff\|_{\vsd{r}}. \label{3.7}
\end{equation}

(e) \ In this part, we derive information on the adjoint operator
$\cA_r^*$ to $\cA_r$ for any $r\in(1,\infty)$. The adjoint
operator acts from $\Vsd{r}^*\equiv\Vs{r'}^{**}$ to $\Vs{r}^*=
\Vsd{r'}$. However, as $\Vs{r'}$ is reflexive, $\Vs{r'}^{**}$ can
be identified with $\Vs{r'}$. Thus, $\cA_r^*$ is an operator from
$\Vs{r'}$ to $\Vsd{r'}$. The domain of $\cA_r^*$ is, by
definition, the set of all $\bfw\in\Vs{r'}$, such that $\langle
\cA_r\bfz,\bfw\rangle_{(\vsd{r},\vs{r'})}$ is, in dependence on
$\bfz$, a bounded linear functional on $\Vs{r}$. This functional
is exactly $\cA_r^*\bfw$ and it satisfies
\begin{displaymath}
\langle\cA_r^*\bfw,\bfz\rangle_{(\vsd{r'},\vs{r})}=\langle
\cA_r\bfz,\bfw\rangle_{(\vsd{r},\vs{r'})}.
\end{displaymath}
By definition of $\cA_r$, the right hand side equals
$(\nabla\bfz,\nabla\bfw)$. This can be also written as
$(\nabla\bfw,\nabla\bfz)$ and it is, in dependence on $\bfz$, a
bounded linear functional acting on $\Vs{r}$ for each fixed
$\bfw\in\Vs{r'}$. From this, we deduce that $D(\cA_r^*)=\Vs{r'}$
and $\cA_{r}^*=\cA_{r'}$.

(f) \ Here, we assume that $1<r<\infty$ and prove the uniqueness
of the solution $\bfv$ of the equation $\cA_r\bfv=\bff$ for any
$\bff\in\Vsd{r}$.

Assume at first that $r>2$. Then, as $\Vs{r}\subset\Vs{r'}$, one
can use equation (\ref{2.1}) with $\bfw=\bfv$ and deduce that
$\cA_r\bfv=\bfzero$ implies $\bfv=\bfzero$, which means that the
solution of the equation $\cA_r\bfv=\bff$ is unique.

Assume now that $1<r<2$. The null space of $\cA_r$ and range of
$\cA_{r'}$ satisfy the identity $N(\cA_r)=R(\cA_{r'})^{\perp}$,
see \cite[p.~168]{Ka}. However, as $R(\cA_{r'})$ is the whole
space $\Vsd{r'}$ (because $r'>2$ and due to parts (c) and (d) of
this proof), the space of annihilators $R(\cA_{r'})^{\perp}$ is
trivial. Thus, we obtain the identity $N(\cA_r)=\{\bfzero\}$. This
implies the uniqueness of the solution $\bfv$ and the injectivity
of operator $\cA_r$.

(g) \ Finally, we assume that $1<r<2$ and prove the existence of a
solution $\bfv$ of the equation $\cA_r\bfv=\bff$ for any
$\bff\in\Vsd{r}$. Since $R(\cA_r)^{\perp}=N(\cA_{r'})=
\{\bfzero\}$, $R(\cA_r)$ is dense in $\Vsd{r}$. The operator
$\cA_{r'}^{-1}$ is bounded from $\Vsd{r'}$ to $\Vs{r'}$, which
follows from the inequality $r'>2$, parts (c) and (d) of this
proof and the closed graph theorem. Consequently, due to
\cite[p.~169]{Ka}, the operator $(\cA_{r'}^*)^{-1}\equiv
\cA_r^{-1}$ is bounded from $\Vsd{r}$ to $\Vs{r}$. Hence $\cA_r$
maps a closed set in $\Vs{r}$ onto a closed set in $\Vsd{r}$. It
means that $R(\cA_r)$ is closed in $\Vsd{r}$. Since it is also
dense in $\Vsd{r}$, we have $R(\cA_r)=\Vsd{r}$. This completes the
proof.
\end{proof}

%====================================================================

\section{The weak Stokes problem} \label{S4}

\noindent
{\bf The spaces $\bfWp^{1-1/r,r}(\Gammai)$,
$\bfWp^{1-1/r',r'}(\Gammao)$ and $\bfWp^{-1/r,r}(\Gammao)$.} \
Recall that the straight lines $\gammai$, $\gammao$ and the line
segments $\Gammai$, $\Gammao$ are sketched on Fig.~1. Let
$1<r<\infty$. We denote by $\bfWp^{1-1/r,r}(\gammai)$ the space of
$\tau$--periodic functions in $W^{1-1/r,r}_{loc}(\gammai)$ and by
$\bfWp^{1-1/r,r}(\Gammai)$ the space of functions from
$W^{1-1/r,r}(\Gammai)$, that can be extended from $\Gammai$ to
$\gammai$ as functions in $\bfWp^{1-1/r,r}(\gammai)$.

The space $\bfWp^{1-1/r',r'}(\Gammao)$ is defined by analogy with
$\bfWp^{1-1/r,r}(\Gammai)$. Let us denote by
$\bfWp^{-1/r,r}(\Gammao)$ the dual space to
$\bfWp^{1-1/r',r'}(\Gammao)$ and by $\langle\, .\, ,\, .\,
\rangle_{(\bfWp^{-1/r,r}(\Gammao),\bfWp^{1-1/r',r'}(\Gammao))}$
the duality pairing between $\bfWp^{-1/r,r}(\Gammao)$ and
$\bfWp^{1-1/r',r'}(\Gammao)$.

An alternative description of $\bfWp^{1-1/r,r}(\Gammai)$ (which
also holds for $\bfWp^{-1/r,r}(\Gammao)$) is explained in the
following remark.

\begin{remark} \label{R3} \rm
Assume at first that $r>2$. In this case, functions from
$\bfWp^{1-1/r,r}(\Gammai)$ have traces at the end points $\Am$ and
$\Ap$ of $\Gammai$. Denote $\widetilde{\bfW}^{1-1/r,r}_{\rm per}
(\Gammai):= \{\bfw\in\bfW^{1-1/r,r}(\Gammai);\ \bfw(\Am)=
\bfw(\Ap)\ \mbox{in the sense of traces}\}$. Obviously,
$\bfWp^{1-1/r.r}(\Gammai)\subset\widetilde{\bfW}^{1-1/r,r}_{\rm
per}(\Gammai)$. Let us show that the opposite inclusion is also
true. Thus, let $\bfw\in\widetilde{\bfW}^{1-1/r,r}_{\rm
per}(\Gammai)$. Denote by the same symbol $\bfw$ the
$\tau$--periodic extension from $\Gammai$ to $\gammai$. We can
assume without loss of generality that $\Am=(0,0)$ and
$\Ap=(0,\tau)$. Put $A_2:=(0,2\tau)$. In order to verify that
$\bfw\in\bfW^{1-1/r,r}_{loc}(\gammai)$, it is sufficient to show
that $\bfw$ lies in the space $\bfW^{1-1/r,r}(\Am,A_2)$. The norm
of $\bfw$ in $\bfW^{1-1/r,r}(\Am A_2)$ equals $\|\bfw\|_{r;\, \Am
A_2}+ \llangle\bfw\rrangle_{1-1/r,r;\, \Am A_2}$ (see formulas (2)
and (3) in \cite{KuJoFu}, paragraph 8.3.2, p.~386), where
\begin{align*}
& \llangle\bfw\rrangle_{1-1/r,r;\, \Am A_2}\ =\ \int_0^{2\tau}\!
\int_0^{2\tau} \frac{|\bfw(0,y)-\bfw(0,z)|^r}{|y-z|^r}\; \rmd y\,
\rmd z \\
& \hspace{15pt} =\ \biggl(
\int_0^{\tau}\!\!\int_0^{\tau}\!+\int_0^{\tau}\!\!
\int_{\tau}^{2\tau} \!+\int_{\tau}^{2\tau}\!\!\int_0^{\tau}\!+
\int_{\tau}^{2\tau}\!\! \int_{\tau}^{2\tau} \biggr)\,
\frac{|\bfw(0,y)-\bfw(0,z)|^r}{|y-z|^r}\; \rmd y\, \rmd z \\
& \hspace{15pt} \leq\ \llangle\bfw\rrangle_{1-1/r,r;\,
\Am\Ap}+\llangle\bfw\rrangle_{1-1/r,r;\, \Ap A_2}+2\int_0^{\tau}
\!\!\int_{\tau}^{2\tau}\frac{|\bfw(0,y)-\bfw(0,z)|^r} {|y-z|^r}\;
\rmd y\, \rmd z.
\end{align*}
The first two terms on the right hand side are equal due to the
$\tau$--periodicity of $\bfw(0,\, .\, )$. The integral on the
right hand side is less than or equal to
\begin{align*}
c\int_0^{\tau} \!\!\int_{\tau}^{2\tau} & \biggl(
\frac{|\bfw(0,y)-\bfw(0,\tau)|^r}{(y-z)^r}+
\frac{|\bfw(0,\tau)-\bfw(0,z)|^r}{(y-z)^r} \biggr)\; \rmd y\,
\rmd z \\
&\leq\ c\int_{\tau}^{2\tau} \frac{|\bfw(0,y)-\bfw(0,\tau)|^r}
{(y-\tau)^{r-1}}\, \rmd y+c\int_0^{\tau}
\frac{|\bfw(0,\tau)-\bfw(0,z)|^r}{(\tau-z)^{r-1}}\,
\rmd z \\
&\leq\ c\int_{\tau}^{2\tau}\!\!\int_{\tau}^{2\tau}
\frac{|\bfw(0,y)-\bfw(0,x)|^r} {|y-x|^r}\, \rmd x\, \rmd y+
c\int_{0}^{\tau}\!\!\int_{0}^{\tau} \frac{|\bfw(0,x)-\bfw(0,z)|^r}
{|x-z|^r}\, \rmd x\, \rmd z.
\end{align*}
The last estimate holds due to the fractional Hardy inequality,
see e.g.~\cite[Theorem 2.1]{HeKuPe}. The right hand side is, due
to the $\tau$--periodicity of the function $\bfw(0,\, .\, )$, less
than or equal to $c\, \llangle\bfw\rrangle_{1-1/r,r;\, \Am\Ap}$.
This confirms that $\bfw\in\bfW^{1-1/r,r}(\Am,A_2)$, indeed.

In the critical case $r=2$, one cannot characterize
$\bfWp^{1/2,2}(\Gammai)$ as in the case $r>2$, because the traces
at the end points $\Am$ and $\Ap$ generally do not exist.
Moreover, although
$\bfW^{1/2,2}_0(\Gammai)=\bfW^{1/2,2}(\Gammai)$, see \cite[Theorem
II.11.1]{LiMa}), one cannot identify $\bfWp^{1/2,2}(\Gammai)$ with
$\bfW^{1/2,2}(\Gammai)$, because e.g.~the linear function
$g(0,y):=y$ for $(0,y)\in\Gammai$ is in $\bfW^{1/2,2}(\Gammai)$,
but its $\tau$--periodic extension to $\gammai$ is not in
$\bfW^{1/2,2}_{loc}(\gammai)$. Thus, one only has the inclusion
$\bfWp^{1/2,2}(\Gammai)\subset \bfW^{1/2,2}(\Gammai)$.

If $1<r<2$ then $\bfW^{1-1/r,r}_0(\Gammai)=
\bfW^{1-1/r,r}(\Gammai)$ (which follows from the density of
$\bfW^{1-1/r,r}(\Gammai)$ in $\bfW^{1/2,2}(\Gammai)$, the identity
$\bfW^{1/2,2}(\Gammai)=\bfW^{1/2,2}_0(\Gammai)$ and the density of
$\bfW^{1/2,2}_0(\Gammai)$ in $\bfC_0^{\infty}(\Gammai^0)$). This
and similar estimates of $\llangle\bfw\rrangle_{1-1/r,r;\, \Am
A_2}$, as above, enable one to show that every function from
$\bfW^{1-1/r,r}(\Gammai)$, periodically extended to $\gammai$ with
the period $\tau$, is in $\bfW^{1-1/r,r}_{loc}(\gammai)$. Thus,
one obtains the identity $\bfWp^{1-1/r,r}(\Gammai)=
\bfW^{1-1/r,r}(\Gammai)$.
\end{remark}

\begin{definition}[weak solution of the Stokes problem
(\ref{1.1})--(\ref{1.6})] \label{D1} \rm Let $r\in(1,\infty)$. Let
$\bff\in\bfW^{-1,r}_0(\Omega)$, $\bfg\in\bfW^{1-1/r,r}_{\rm
per}(\Gammai)$ and $\bfh\in\bfWp^{-1/r,r}(\Gammao)$. Let $\bbF\in
L^r(\Omega)^{2\times 2}$ be a tensor function, satisfying
$\div\bbF=\bff$ in the sense of distributions, provided by Lemma
\ref{L4}. A divergence--free function $\bfu\in
\bfW^{1,r}(\Omega)$, satisfying the conditions (\ref{1.3}),
(\ref{1.4}) and (\ref{1.5}) in the sense of traces on $\Gammai$,
$\Gammaw$ and $\Gammam$, respectively, and the equation
\begin{equation}
\nu\int_{\Omega}\nabla\bfu:\nabla\bfw\; \rmd\bfx\ =\
-\int_{\Omega}\bbF:\nabla\bfw\; \rmd\bfx+\langle\bfh,
\bfw\rangle_{(\bfWp^{-1/r,r}(\Gammao),\bfWp^{1-1/r',r'}(\Gammao))}
\label{4.1}
\end{equation}
for all $\bfphi\in\Vs{r'}$, is said to be a {\it weak solution} to
the Stokes problem (\ref{1.1})--(\ref{1.6}).
\end{definition}

We show in Theorem \ref{T2} that if $\bfu$ is a weak solution of
the problem (\ref{1.1})--(\ref{1.6}) then there exists an
associate pressure $p$ so that $\bfu$ and $p$ satisfy an analogue
of the condition (\ref{1.6}) in a certain weak sense on $\Gammao$.

\begin{theorem}[a posteriori properties of a weak solution] \label{T2}
1) Let $r$, $\bff$, $\bbF$, $\bfg$ and $\bfh$ satisfy the
assumptions from Definition \ref{D1}. Let $\bfu$ be a weak
solution of the Stokes problem (\ref{1.1})--(\ref{1.6}). Then
there exists an associated pressure $p\in L^r(\Omega)$ such that
$\bfu$ and $p$ satisfy the equations (\ref{1.1}) and (\ref{1.2})
in the sense of distributions in $\Omega$ and
\begin{equation}
(-\nu\br\nabla\bfu-p\br\bbI-\bbF)\cdot\bfn\ =\ \bfh
\label{4.3}
\end{equation}
holds as an equality in $\bfWp^{-1/r,r}(\Gammao)$.

2) If, moreover, $\bff\in\bfL^r(\Omega)$ then the tensor function
$\bbF$ can be constructed so that it lies in $W^{1,r}_{\rm
per}(\Omega)^{2\times 2}$, satisfies the equation $\div\bbF=\bff$
a.e.~in $\Omega$ and the condition $\bbF\cdot\bfn=\bfzero$ a.e.~on
$\Gammao$. In this case, (\ref{4.3}) takes the form
\begin{equation}
(-\nu\br\nabla\bfu-p\br\bbI)\cdot\bfn\ =\ \bfh, \label{4.4}
\end{equation}
consistent with (\ref{1.6}).
\end{theorem}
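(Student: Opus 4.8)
The strategy is to recover the pressure by a De Rham–type argument applied to the defect functional of the weak formulation, and then to read off the boundary identity \eqref{4.3} by integration by parts against test functions that are no longer required to vanish on $\Gammao$. First I would fix a weak solution $\bfu\in\bfW^{1,r}(\Omega)$ and consider the linear functional
\[
\bfPhi(\bfphi)\ :=\ \nu\int_{\Omega}\nabla\bfu:\nabla\bfphi\;\rmd\bfx+\int_{\Omega}\bbF:\nabla\bfphi\;\rmd\bfx-\langle\bfh,\bfphi\rangle_{(\bfWp^{-1/r,r}(\Gammao),\bfWp^{1-1/r',r'}(\Gammao))}
\]
defined on the whole space $\bfW^{1,r'}(\Omega)$ of test functions satisfying the periodicity condition \eqref{1.5} and vanishing on $\Gammai\cup\Gammaw$ (call it $\bfW$; note $\Vs{r'}\subset\bfW$). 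By Definition \ref{D1}, $\bfPhi$ vanishes on $\Vs{r'}$, i.e.\ on the divergence-free subspace, so $\bfPhi$ annihilates $\ker(\div)\cap\bfW$. The classical Nečas/De Rham lemma — in the form adapted to this mixed boundary situation, which hinges precisely on Lemma \ref{L4} (surjectivity of $\div:\bfL^{r}(\Omega)\to$ the relevant negative-order space, equivalently the Bogovskii-type bounded right inverse) — then yields a unique $p\in L^r(\Omega)$ such that
\[
\bfPhi(\bfphi)\ =\ \int_{\Omega}p\,\div\bfphi\;\rmd\bfx\qquad\text{for all }\bfphi\in\bfW.
\]
Restricting this identity to $\bfphi\in\cCns$ gives $-\nu\Delta\bfu-\div\bbF+\nabla p=\bfzero$, i.e.\ \eqref{1.1}–\eqref{1.2} in the sense of distributions (with $\bff=\div\bbF$).

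Next I would extract the boundary condition. Because $-\nu\Delta\bfu-\div\bbF+\nabla p=\bfzero$ in $\cD'(\Omega)$ and all the terms $\nu\nabla\bfu$, $\bbF$, $p\,\bbI$ lie in $\bfL^r$, the tensor $\bfT:=-\nu\nabla\bfu-p\,\bbI-\bbF$ satisfies $\div\bfT=\bfzero$ in $\Omega$ and $\bfT\in\bfL^r(\Omega)^{2\times2}$, hence its normal trace $\bfT\cdot\bfn$ is well defined as an element of $\bfW^{-1/r,r}(\partial\Omega)$ by the standard normal-trace theorem for $\bfL^r$-tensors of integrable divergence; localizing to $\Gammao$ and using the periodicity of $\bfu$, $\bbF$ (and of $p$, which inherits periodicity from uniqueness of the De Rham pressure under the periodic structure) places $\bfT\cdot\bfn$ in $\bfWp^{-1/r,r}(\Gammao)$. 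For any $\bfphi\in\bfW$, Green's formula gives
\[
\int_{\Omega}\bfT:\nabla\bfphi\;\rmd\bfx\ =\ -\int_{\Omega}(\div\bfT)\cdot\bfphi\;\rmd\bfx+\langle\bfT\cdot\bfn,\bfphi\rangle_{\partial\Omega}\ =\ \langle\bfT\cdot\bfn,\bfphi\rangle_{\Gammao},
\]
the last equality because $\div\bfT=\bfzero$ and $\bfphi$ vanishes on $\Gammai\cup\Gammaw$ while the integrals over $\Gammam$ and $\Gammap$ cancel by periodicity. On the other hand, $\int_{\Omega}\bfT:\nabla\bfphi\;\rmd\bfx=-\bfPhi(\bfphi)+\int_{\Omega}p\,\div\bfphi\;\rmd\bfx-\int_\Omega p\,\div\bfphi\,\rmd\bfx$; more directly, expanding $\bfT$ and using $\int_\Omega p\,\div\bfphi=\bfPhi(\bfphi)$ shows $\int_\Omega\bfT:\nabla\bfphi\,\rmd\bfx=-\langle\bfh,\bfphi\rangle_{\Gammao}$ for all $\bfphi\in\bfW$. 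Comparing the two evaluations yields $\langle\bfT\cdot\bfn,\bfphi\rangle_{\Gammao}=-\langle\bfh,\bfphi\rangle_{\Gammao}$ for every $\bfphi\in\bfW$; since the traces of such $\bfphi$ on $\Gammao$ exhaust a dense subset of $\bfWp^{1-1/r',r'}(\Gammao)$ (using an extension lemma in the spirit of Lemma \ref{L2} to realize arbitrary periodic boundary data on $\Gammao$ by functions in $\bfW$), we conclude $\bfT\cdot\bfn=-\bfh$ — wait, sign bookkeeping gives exactly $(-\nu\nabla\bfu-p\,\bbI-\bbF)\cdot\bfn=\bfh$ in $\bfWp^{-1/r,r}(\Gammao)$, which is \eqref{4.3}.

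For part 2), when $\bff\in\bfL^r(\Omega)$ I would replace the Bogovskii-type $\bbF$ from Lemma \ref{L4} by a more carefully constructed one: solve an auxiliary divergence equation $\div\bbF=\bff$ with the side constraint $\bbF\cdot\bfn=\bfzero$ on $\Gammao$ and $\tau$-periodicity, obtaining $\bbF\in W^{1,r}_{\rm per}(\Omega)^{2\times2}$. Concretely, since $\bff\in\bfL^r$ one can take $\bbF$ of the form $\bbF=\nabla\bfeta$ for a vector potential $\bfeta$ solving a Neumann-type elliptic problem $\Delta\bfeta=\bff$ with $\partial\bfeta/\partial\bfn=\bfzero$ on $\Gammao$ and periodic/Dirichlet conditions elsewhere, adjusted by the usual compatibility correction; elliptic $L^r$-regularity then puts $\bbF\in W^{1,r}$, the homogeneous Neumann data give $\bbF\cdot\bfn=\bfzero$ on $\Gammao$, and periodicity is built in. Since a weak solution in the sense of Definition \ref{D1} is independent of the particular choice of $\bbF$ (two admissible choices differ by a divergence-free tensor, absorbed into the pressure), $\bfu$ is unchanged, and substituting this $\bbF$ into \eqref{4.3} makes the $\bbF\cdot\bfn$ term vanish, giving \eqref{4.4}.

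\textbf{Main obstacle.} The delicate point is the density/trace step in the recovery of \eqref{4.3}: one must know that the traces on $\Gammao$ of functions $\bfphi\in\bfW^{1,r'}(\Omega)$ that are periodic on $\Gammam\cup\Gammap$ and vanish on $\Gammai\cup\Gammaw$ fill out all of $\bfWp^{1-1/r',r'}(\Gammao)$ (so the dual identity upgrades to equality in $\bfWp^{-1/r,r}(\Gammao)$), and this interacts nontrivially with the corner points $\Bm,\Bp$ where the Dirichlet-type and ``do nothing'' conditions meet — exactly the subtlety flagged in Remark \ref{R3}. Handling it cleanly requires the extension construction (analogue of Lemma \ref{L2}) together with the corner-compatible characterization of $\bfWp^{1-1/r',r'}(\Gammao)$ from Remark \ref{R3}; everything else (De Rham via Lemma \ref{L4}, the normal-trace theorem for $\bfL^r$-tensors, the Green identity, and the elliptic construction of a better $\bbF$ in part 2) is routine.
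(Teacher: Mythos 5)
Your part 1) follows the same skeleton as the paper's proof --- De Rham to produce the pressure, $\nabla p\in\bfW^{-1,r}(\Omega)$ to get $p\in L^r(\Omega)$, the normal--trace theorem for $L^r$--tensors with vanishing divergence, the generalized Gauss identity, and a density--of--traces argument on $\Gammao$ --- but with one genuine variation: you test against the full space $\bfW$ of non--solenoidal admissible functions and ask for the representation $\bfPhi(\bfphi)=\int_\Omega p\,\div\bfphi\,\rmd\bfx$ on all of $\bfW$ at once. The paper instead tests only against $\cCns$ and then $\cCs$; since every $\bfw\in\cCs$ satisfies $\int_{\Gammao}\bfw\cdot\bfn\,\rmd l=0$, the traces fill only the zero--flux subspace of $\bfWp^{1-1/r',r'}(\Gammao)$, whose annihilator is $\R\,\bfn$, so the boundary identity is first obtained only up to a term $\cc04\,\bfn$ which is then absorbed by shifting $p_0$ by a constant (eqn.\ (\ref{4.7})). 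Your route avoids that renormalization, but at the price of a stronger De Rham statement: you must know that $\div$ maps $\bfW$ onto \emph{all} of $L^{r'}(\Omega)$ (constants included) with a bounded right inverse, i.e.\ you need a field in $\bfW$ with prescribed nonzero flux through $\Gammao$. This is true (the constructions in Lemma~\ref{L3} or part c) of Lemma~\ref{L2} supply such a field), but it is not what Lemma~\ref{L4} provides, and you should say where it comes from. Your final sign bookkeeping is muddled, but the paper's own equations (\ref{1.6}), (\ref{4.3}) and (\ref{4.7}) are not mutually consistent in the sign of $p$ either, so I do not press that point.

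Part 2) contains a genuine error. You claim the weak solution is independent of the choice of $\bbF$ because two admissible choices ``differ by a divergence--free tensor, absorbed into the pressure''. A divergence--free tensor $\bbG\in L^r(\Omega)^{2\times2}$ contributes $-\int_\Omega\bbG:\nabla\bfw\,\rmd\bfx=-\langle\bbG\cdot\bfn,\bfw\rangle_{\Gammao}$ to the right--hand side of (\ref{4.1}) (the test function vanishes on $\Gammai\cup\Gammaw$ and is periodic on $\Gammam\cup\Gammap$), and this boundary term is in general nonzero: it changes the effective ``do nothing'' datum on $\Gammao$ and hence the weak solution $\bfu$ itself --- this is precisely the content of Remark~\ref{R2}. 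The conclusion of part 2) survives because one simply \emph{adopts} the good $\bbF$ in Definition~\ref{D1} from the outset, but your justification for why this is harmless is wrong. Moreover, your construction of the good $\bbF$ as $\nabla\bfeta$ with $\Delta\bfeta=\bff$ and Neumann data on $\Gammao$ requires $W^{2,r}$ regularity up to the boundary for a mixed boundary--value problem in a domain with corners --- exactly the kind of regularity that is \emph{not} routine in this geometry and that the rest of the paper works hard to obtain. The paper's Lemma~\ref{L3} avoids this entirely: it extends $\bff$ by zero to $\Omegah=\Omega\cup P$, corrects the mean by an explicit field supported near $\Gammai$, and applies the Bogovskii operator with zero boundary trace, so that near $\Gammao$ the tensor $\bbF$ coincides with a $W^{1,r}_0$ field and $\bbF\cdot\bfn=\bfzero$ there for free.
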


\begin{proof}
Suppose at first that the test function $\bfw$, used in
(\ref{4.1}), is in $\cCns$. Then, since $\bfw=\bfzero$ on
$\Gammao$, (\ref{4.1}) and the equation $\bff=\div\bbF$ imply that
\begin{displaymath}
\bllangle\nu\Delta\bfu+\div\bbF,\bfw\brrangle\ =\ 0.
\end{displaymath}
As this holds for all $\bfw\in\cCns$, we can apply De Rham's lemma
(see \cite[p.~14]{Te}) and deduce that there exists a distribution
$p_0$ in $\Omega$, such that
\begin{equation}
\nu\Delta\bfu+\div\bbF\ =\ \nabla p_0 \label{4.5}
\end{equation}
holds in $\Omega$ in the sense of distributions. As both
$\nu\Delta\bfu$ and $\div\bbF$ can also be identified with
elements of $\bfW^{-1,r}(\Omega)$, $\nabla p_0$ belongs to
$\bfW^{-1,r}(\Omega)$, too. It follows from \cite[Lemma
IV.1.1]{Ga} that $p_0\in L^r(\Omega)$ and it can be chosen so that
\begin{equation}
\|p_0\|_r\ \leq\ c\, \bigl\| \nu\Delta\bfu+\div\bbF
\bigr\|_{\bfW^{-1,r}}, \label{4.6}
\end{equation}
where $c$ depends only on $\nu$ and $\Omega$.

Since $-\nu\br\nabla\bfu+p_0\br\bbI-\bbF\in L^r(\Omega)^{2\times
2}$ and $\div(-\nu\br\nabla\bfu+p_0\br\bbI-\bbF)=\bfzero$ in the
sense of distributions, we can apply Theorem III.2.2 from
\cite{Ga} and deduce that
$(-\nu\br\nabla\bfu+p_0\br\bbI-\bbF)\cdot\bfn\in
\bfWp^{-1/r,r}(\Gammao)$ (in the sense of traces).

Let $\bfw\in\cCs$. The equation (\ref{4.1}) and the generalized
Gauss identity (see \cite[p.~160]{Ga} imply that
\begin{align*}
0\ &=\ \blangle\div[\nu\nabla\bfu+\bbF-p_0\br\bbI\br],
\bfw\brangle_{(\bfW^{-1,r}_0(\Omega),\bfW^{1,r'}(\Omega))} \\
&=\ \blangle (\nu\br\nabla\bfu+\bbF-p_0\br\bbI)\cdot\bfn,\bfw
\brangle_{(\bfW^{-1/r,r}(\partial\Omega),\bfW^{1-1/r',r'}(\partial\Omega))}
-\int_{\Omega}[\nu\br\nabla\bfu+\bbF]:\nabla\bfw\; \rmd\bfx \\
&=\ \blangle (\nu\br\nabla\bfu+\bbF-p_0\br\bbI)\cdot\bfn,\bfw
\brangle_{\bfWp^{-1/r,r}(\Gammao),\bfWp^{1-1/r',r'}(\Gammao)}-
\langle\bfh,\bfw\rangle_{(\bfWp^{-1,r}
(\Gammao),\bfWp^{1-1/r',r'}(\Gammao))} \\
& \hspace{15pt} +\langle\bfh,\bfw\rangle_{(\bfWp^{-1,r}
(\Gammao),\bfWp^{1-1/r',r'}(\Gammao))}-\int_{\Omega}[\nu\br\nabla
\bfu+\bbF]:\nabla\bfw\; \rmd\bfx \\
&=\ \blangle (\nu\br\nabla\bfu+\bbF-p_0\br\bbI)\cdot\bfn-\bfh,\bfw
\brangle_{\bfWp^{-1/r,r}(\Gammao),\bfWp^{1-1/r',r'}(\Gammao)}.
\end{align*}
The set of traces of all functions from $\cCs$ on $\Gammao$ is
dense in the set of all functions
$\bfw\in\bfWp^{1-1/r',r'}(\Gammao)$, such that
$\int_{\Gammao}\bfw\cdot\bfn\; \rmd l=0$. (This follows from the
density of the space of all functions
$\bfw\in\boldsymbol{\cC}_{\rm per}^{\infty}(\Gammao)$, such that
$\int_{\Gammao}\bfw\cdot\bfn\; \rmd l=0$, in
$\{\bfw\in\bfWp^{1-1/r',r'}(\Gammao);\
\int_{\Gammao}\bfw\cdot\bfn\; \rmd l=0\}$, and from the
possibility of extension of any function in $\boldsymbol{\cC}_{\rm
per}^{\infty}(\Gammao)$ from $\Gammao$ to $\Omega$ so that the
extended function is in $\cCs$.) Hence there exists $\cn04\in\R$
such that $\bfu$ and $p_0$ satisfy
\vspace{-5pt}
\begin{equation}
(-\nu\br\nabla\bfu-\bbF-p_0\br\bbI)\cdot\bfn-\bfh\ =\
\cc04\br\bfn, \label{4.7}
\end{equation}
as an equality in $\bfWp^{-1/r,r}(\Gammao)$. Put $p:=p_0-\cc04$.
Then $\bfu$ and $p$ satisfy the equation (\ref{1.1}) in the sense
of distributions in $\Omega$ and the boundary condition
(\ref{4.3}) as an equality in $\bfWp^{-1/r,r}(\Gammao)$. This
completes the proof of part 1). The statements in part 2) follow
from the next lemma.

Denote by $W^{1,r}_{\rm per}(\Omega)$ is the space of functions
from $W^{1,r}(\Omega)$, that satisfy in the sense of traces the
condition of periodicity on $\Gammam$ and $\Gammap$, analogous to
(\ref{1.5}).

\begin{lemma} \label{L3}
Let $\bff\in\bfL^r(\Omega)$ be given. Then there exists $\bbF\in
W^{1,r}_{\rm per} (\Omega)^{2\times 2}$, such that $\div\bbF=\bff$
a.e.~in $\Omega$, $\bbF\cdot\bfn=\bfzero$ a.e.~on $\Gammao$ in the
sense of traces and
\begin{equation}
\|\bbF\|_{1,r}\ \leq\ c\, \|\bff\|_r, \label{4.8}
\end{equation}
where $c=c(\Omega,r)$.
\end{lemma}

\begin{proof}
Denote $\Omegah:=\Omega\cup P$. Then $|\Omegah|=\tau d$. Define
$\bff:=\bfzero$ in $P$. Thus, $\bff\in\bfL^r(\Omegah)$. Put
$\bfk:=|\Omegah|^{-1}\, \int_{\Omegah}\bff\; \rmd\bfx$.

Let $\zeta=\zeta(x_1)$ be a smooth real function in $[0,d]$, such
that $\zeta(0)=1$ and $\zeta$ is supported in $[0,\delta]$, where
$\delta>0$ is so small that the profile $P$ (see Fig.~1) lies in
the stripe $\delta<x_1<d$. Since $\int_{\Omegah}\zeta'(x_1)\,
\rmd\bfx=-\tau$, we have $\int_{\Omegah}(\bff+\bfk d\,
\zeta'(x_1))\; \rmd\bfx=\bfzero$. Thus, due to \cite[Theorem
III.3.3]{Ga}, there exists $\bbF_0\in W^{1,r}_0(\Omegah)^{2\times
2}$, such that $\div\bbF_0=\bff+\bfk d\, \zeta'$ a.e.~in $\Omegah$
and
\begin{equation*}
\|\bbF_0\|_{1,r;\, \Omegah}\ \leq\ c\, \|\bff+\bfk d\,
\zeta'\|_{r;\, \Omegah}\ \leq\ c\, \|\bff\|_r,
\end{equation*}
where $c=c(\tau,d,\zeta)$. Since $\bfk d\, \zeta'(x_1)=\div[\bfk
d\, \zeta(x_1)\otimes \bfe_1]$, $\bbF_0$ satisfies
\begin{displaymath}
\div[\bbF_0-\bfk d\, \zeta(x_1)\otimes\bfe_1]\ =\ \bff
\end{displaymath}
a.e.~in $\Omega$. Put $\bbF:=\bbF_0-\bfk d\,
\zeta(x_1)\otimes\bfe_1$. The tensor function $\bbF$ has all the
properties, stated in the lemma.
\end{proof}

\vspace{4pt}
The proof of Theorem \ref{T2} is completed.
\end{proof}

\begin{remark} \label{R2} \rm
The identification of the right hand side of (\ref{1.1}) with
$\div\bbF$ enables us to deduce that $\bfu$ and $p$ satisfy
(\ref{4.3}) or (\ref{4.4}), which are equalities in
$\bfWp^{-1/r,r}(\Gammao)$.

If $\bff\in\bfW^{-1,2}_0(\Omega)$ and $\bbF$ is only in
$L^r(\Omega)^{2\times 2}$, as in part 1) of Theorem \ref{T2}, then
neither $(-\nu\br\nabla\bfu-p\br\bbI)\cdot\bfn$, nor
$\bbF\cdot\bfn$ need not be in $\bfW^{-1/r,r}_{\rm per}(\Gammao)$.
Thus, the sole term $\bbF\cdot\bfn$ need not have a sense on
$\Gammao$ and it is therefore generally not possible to require
$\bbF$ to satisfy $\bbF\cdot\bfn=\bfzero$ on $\Gammao$. The
situation is different if $\bff\in\bfL^r(\Omega)$ and $\bbF\in
W^{1,2}_{\rm per}(\Omega)$, see part 2) of Theorem \ref{T2}.
\end{remark}

In order to establish the existence of a weak solution of the
problem (\ref{1.1})--(\ref{1.6}), we shall need the next lemma. It
is a modification of a result, proven in \cite[Section 3]{FeNe3}.

\begin{lemma} \label{L2}
Let $1<r<\infty$ and $\bfg\in\bfW^{s,r}(\Gammai)$, where $s>1/r$
if $1<r\leq 2$ and $s=1-1/r$ if $r>2$. Let $\bfg$ satisfy the
condition $\bfg(\Am)=\bfg(\Ap)$. There exists a divergence--free
extension $\bfgs$ of $\bfg$ from $\Gammai$ to $\Omega$ and a
constant $\cn05>0$, independent of $\bfg$, such that
$\bfgs\in\bfW^{1,r}(\Omega)$, $\bfgs=\bfzero$ on $\Gammaw$ in the
sense of traces,

\vspace{4pt}
a) \ $\|\bfgs\|_{1,r}\leq\cc05\, \|\bfg\|_{s,r;\, \Gammai}$,

\vspace{4pt}
b) \ $\bfgs$ satisfies the condition of periodicity (\ref{1.5}) in
the sense of traces on $\Gammam\cup\Gammap$,

\vspace{4pt}
c) \ $\bfgs=(\Phi/\tau)\, \bfe_1$ \ in a neighbourhood of
$\Gammao$, where $\Phi=-\int_{\Gammai}\bfg\cdot\bfn\; \rmd l$.
\end{lemma}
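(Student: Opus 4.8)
\textbf{Proof proposal for Lemma \ref{L2}.}

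The plan is to build $\bfgs$ in two stages. First, I would construct a preliminary extension $\bfg_1$ of $\bfg$ from $\Gammai$ into a neighbourhood of $\Gammai$ inside $\Omega$ that handles the boundary values, the periodicity and the flux, and then correct its divergence by a Bogovskii--type operator. Concretely, since $\bfg(\Am)=\bfg(\Ap)$, the $\tau$--periodic extension of $\bfg$ to $\gammai$ lies in $\bfWp^{s,r}(\gammai)$ (for $r>2$ this is Remark \ref{R3}; for $1<r\le 2$ with $s>1/r$ a similar fractional--Hardy estimate applies, or one simply uses that the periodic extension is locally $W^{s,r}$). Extend this periodic function off the line $\gammai$ into $\Omega$ by the standard trace--extension operator, multiplied by a cutoff $\xi(x_1)$ that equals $1$ near $\Gammai$ and vanishes for $x_1\ge\delta_0$ (with $\delta_0$ small enough that $P\subset\{x_1>\delta_0\}$), obtaining $\bfg_1\in\bfW^{1,r}(\Omega)$ with $\|\bfg_1\|_{1,r}\le c\,\|\bfg\|_{s,r;\Gammai}$, supported away from $\Gammao$ and from $\Gammaw$, periodic on $\Gammam\cup\Gammap$, and equal to $\bfg$ on $\Gammai$.

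Second, I would fix the flux and the divergence. Let $\Phi=-\int_{\Gammai}\bfg\cdot\bfn\;\rmd l$; note that on $\Gammai$ the outer normal is $-\bfe_1$, so $\Phi=\int_{\Gammai}g_1\;\rmd l$ where $g_1$ is the first component. Choose a fixed smooth divergence--free ``background'' field carrying this flux, namely $\bfg_2:=(\Phi/\tau)\,\psi(x_1)\,\bfe_1$ where $\psi$ is smooth, $\psi\equiv 0$ near $x_1=0$ and $\psi\equiv 1$ near $x_1=d$; then $\div\bfg_2=0$, $\bfg_2$ is periodic in $x_2$, $\bfg_2=\bfzero$ on $\Gammaw$ (it is supported in $x_1>\delta_0$, away from $P$), and $\bfg_2=(\Phi/\tau)\bfe_1$ in a neighbourhood of $\Gammao$, which is exactly requirement c). Now set $\bfh_0:=\bfg_1+\bfg_2$. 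By construction $\bfh_0\in\bfW^{1,r}(\Omega)$, it equals $\bfg$ on $\Gammai$, vanishes on $\Gammaw$, is periodic on $\Gammam\cup\Gammap$, and coincides with $(\Phi/\tau)\bfe_1$ near $\Gammao$; its only defect is $\div\bfh_0\ne 0$. One computes $\int_\Omega\div\bfh_0\;\rmd\bfx=\int_{\partial\Omega}\bfh_0\cdot\bfn\;\rmd l=-\Phi+\Phi+(\text{periodic cancellation on }\Gammam,\Gammap)=0$, using that on $\Gammao$ the contribution is $\int_{\Gammao}(\Phi/\tau)\;\rmd l=\Phi$ and on $\Gammai$ it is $-\Phi$. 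Moreover $\div\bfh_0=\div\bfg_1$ is supported in the region $\delta_0\le x_1\le\delta_1$ away from $\Gammai$, $\Gammao$, and $\Gammaw$.

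Finally I would apply a Bogovskii operator on a subdomain. Pick a bounded Lipschitz (in fact smooth, star--shaped--with--respect--to--a--ball) subdomain $\Omega'\subset\subset\Omega$ that contains $\supp(\div\bfh_0)$ and whose closure is disjoint from $\Gammai\cup\Gammao\cup\Gammaw$ (possible since that support is a compact set in the interior strip). Since $\int_{\Omega'}\div\bfh_0\;\rmd\bfx=\int_\Omega\div\bfh_0\;\rmd\bfx=0$, Theorem III.3.3 in \cite{Ga} (Bogovskii) gives $\bfw_0\in\bfW^{1,r}_0(\Omega')^{2}$, extended by zero to $\Omega$, with $\div\bfw_0=\div\bfh_0$ in $\Omega$ and $\|\bfw_0\|_{1,r}\le c\,\|\div\bfh_0\|_{r}\le c\,\|\bfg\|_{s,r;\Gammai}$. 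Set $\bfgs:=\bfh_0-\bfw_0$. Then $\div\bfgs=0$; because $\bfw_0$ vanishes near $\Gammai$, $\Gammaw$, $\Gammam\cup\Gammap$ and $\Gammao$, the function $\bfgs$ inherits from $\bfh_0$ the trace $\bfg$ on $\Gammai$, the vanishing trace on $\Gammaw$, the periodicity on $\Gammam\cup\Gammap$, and the identity $\bfgs=(\Phi/\tau)\bfe_1$ in a neighbourhood of $\Gammao$; and estimate a) follows by adding the bounds on $\bfg_1$, $\bfg_2$ and $\bfw_0$.

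The main obstacle I anticipate is not the interior Bogovskii correction but the first stage: producing a genuinely periodic $\bfW^{1,r}$ extension of $\bfg$ near $\Gammai$ whose norm is controlled by $\|\bfg\|_{s,r;\Gammai}$, in the delicate regime $r>2$ where traces at the corner points $\Am,\Ap$ exist and the periodic extension must be $W^{1-1/r,r}_{loc}$ across $\Gammai^0\cup\{\Am\}$ — this is precisely where Remark \ref{R3} (and, for $1<r\le2$, the fractional Hardy inequality) does the work, and where the hypothesis $\bfg(\Am)=\bfg(\Ap)$ is essential. Everything else is a bookkeeping assembly of standard extension, cutoff, and Bogovskii arguments, together with the flux computation $\int_{\partial\Omega}\bfh_0\cdot\bfn=0$.
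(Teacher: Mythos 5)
Your overall architecture (preliminary trace extension near $\Gammai$ $+$ flux--carrying field $+$ divergence correction) is workable, but the divergence--correction step as written contains a genuine error. First, $\div\bfg_2=(\Phi/\tau)\psi'(x_1)\neq 0$, not zero as you claim, so $\div\bfh_0\neq\div\bfg_1$. More seriously, $\div\bfg_1$ is \emph{not} supported in an interior strip $\delta_0\le x_1\le\delta_1$: the standard trace--extension operator does not produce a divergence--free field, so $\div(\xi\, E\bfg)=\xi'\,(E\bfg)_1+\xi\,\div(E\bfg)$ is in general nonzero all the way up to $\Gammai$, where $\xi\equiv 1$. Consequently $\supp(\div\bfh_0)$ touches $\Gammai$, there is no subdomain $\Omega'\subset\subset\Omega$ containing it, and your Bogovskii correction $\bfw_0\in\bfW^{1,r}_0(\Omega')$ extended by zero cannot satisfy $\div\bfw_0=\div\bfh_0$ in all of $\Omega$. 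The statement you need to make the last step legitimate --- that the divergence defect lives in a compact subset of $\Omega$ --- is exactly what your construction fails to deliver.

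The gap is repairable in two ways. The paper's route avoids it entirely: it first extends $\bfg$ from $\Gammai$ to a boundary datum on all of $\partial\Omega$ (zero on $\Gammaw$, periodic on $\Gammam\cup\Gammap$, total flux zero, with $\|\bfg\|_{1-1/r,r;\,\partial\Omega}\le c\,\|\bfg\|_{s,r;\,\Gammai}$) and then invokes the divergence--free extension of boundary data with vanishing total flux (\cite[Exercise III.3.5]{Ga}), so no a posteriori divergence correction is needed; property c) is then obtained by a further modification near $\Gammao$ as in \cite[Subsection 3.3]{FeNe3}. Alternatively, within your scheme, replace the interior subdomain by $\Omega'=\Omega\cap\{x_1<\delta_2\}$ with $\delta_1<\delta_2<d$ chosen so that $\Omega'$ contains $\supp(\div\bfh_0)$ and stays away from $\Gammao$; since $\bfw_0\in\bfW^{1,r}_0(\Omega')$ has zero trace on $\partial\Omega'$ (in particular on $\Gammai$ and on the parts of $\Gammaw$, $\Gammam$, $\Gammap$ it meets) and vanishes identically for $x_1\ge\delta_2$, the subtraction $\bfgs=\bfh_0-\bfw_0$ preserves the trace on $\Gammai$, the vanishing on $\Gammaw$, the periodicity, and property c). The mean--value condition $\int_{\Omega'}\div\bfh_0\,\rmd\bfx=0$ still follows from your (correct) flux computation over $\partial\Omega$. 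Your first stage is essentially sound and does line up with where the paper locates the real difficulty (the periodic $W^{1-1/r,r}$ extension across the corner points, resting on Remark \ref{R3} and the hypothesis $\bfg(\Am)=\bfg(\Ap)$).
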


\noindent
{\bf Principles of the proof.} \ The assumptions on number $s$
guarantee that $rs>1$ and it makes therefore sense to speak about
the traces of the function $\bfg$ at the end points $\Am$ and
$\Ap$ of $\Gammai$. This enables one to extend at first $\bfg$
from $\Gammai$ to $\partial\Omega$ in the way, described in
\cite[subsection 3.2]{FeNe1}. The extended function (which is
again denoted by $\bfg$) satisfies the condition of periodicity
(\ref{1.5}), the condition $\bfg=\bfzero$ on $\Gammaw$ and the
equality $\int_{\partial\Omega}\bfg\cdot\bfn\; \rmd l=0$. The
inequality $\|\bfg\|_{1-1/r,r;\, \partial\Omega}\leq c\,
\|\bfg\|_{s,r;\, \Gammai}$ can be proven by analogy with
\cite[Lemma 1]{FeNe1}, expressing the norm of $\bfg$ in
$\bfW^{s,r}(\Gammai)$ by formulas (2) and (3) in \cite{KuJoFu},
paragraph 8.3.2, p.~386, and the norm of $\bfg$ in
$\bfW^{1-1/r,r}(\partial\Omega)$ by formulas (II.4.8) and (II.4.9)
in \cite{Ga}, p.~64.

The existence of a divergence--free extension
$\bfgs\in\bfW^{1,r}(\Omega)$ of function $\bfg$ from
$\partial\Omega$ to $\Omega$ follows from \cite[Exercise
III.3.5]{Ga}. The extension satisfies the estimate
$\|\bfgs\|_{1,r}\leq c\, \|\bfg\|_{1-1/r,r;\, \partial\Omega}$
$\leq c\, \|\bfg\|_{s,r;\, \partial\Omega}$.

The function $\bfgs$ can be further modified in the way described
in \cite[Subsection 3.3]{FeNe3}, so that it finally has the
property c), too. Note that $r=2$ in \cite{FeNe3}. However,
dealing with general $r\in(1,\infty)$ does not affect the used
procedure and the resulting estimates. \hfill $\square$

\vspace{4pt}
The next theorem provides an information on the existence of a
weak solution to the Stokes problem (\ref{1.1})--(\ref{1.6}).

\begin{theorem}[on existence of a weak solution] \label{T3}
Let $1<r<\infty$, $\bff\in\bfW^{-1,r}_0(\Omega)$, $\bfg$ be a
given velocity profile on $\Gammai$ satisfying the assumptions of
Lemma \ref{L2}, and $\bfh\in\bfWp^{-1/r,r}(\Gammao)$. Then the
Stokes problem (\ref{1.1})--(\ref{1.6}) has a unique weak solution
$\bfu$ (in the sense of Definition \ref{D1}). The functions $\bfu$
and $p$ (an associated pressure, given by Theorem \ref{T2})
satisfy
\begin{equation}
\|\bfu\|_{1,r}+\|p\|_r\ \leq\ c\, \bigl[\br
\|\bff\|_{\bfW^{-1,r}_0} \|\bfg\|_{s,r;\, \Gammai}+
\|\bfh\|_{\bfWp^{-1/r,r}(\Gammao)} \bigr], \label{4.9}
\end{equation}
where $c=c(\Omega,\nu)$.
\end{theorem}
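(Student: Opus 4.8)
The plan is to reduce the inhomogeneous boundary-value problem to one solvable by the weak Stokes operator $\cA_r$ of Theorem~\ref{T1}. First I would use Lemma~\ref{L2} to extend the boundary datum $\bfg$ to a divergence-free function $\bfgs\in\bfW^{1,r}(\Omega)$ vanishing on $\Gammaw$, satisfying the periodicity condition on $\Gammam\cup\Gammap$, and equal to $(\Phi/\tau)\br\bfe_1$ in a neighbourhood of $\Gammao$. Writing $\bfu=\bfv+\bfgs$, a weak solution $\bfu$ exists if and only if $\bfv\in\Vs{r}$ solves, for all $\bfw\in\Vs{r'}$,
\begin{equation*}
\nu(\nabla\bfv,\nabla\bfw)\ =\ -(\bbF,\nabla\bfw)+\langle\bfh,\bfw\rangle_{(\bfWp^{-1/r,r}(\Gammao),\bfWp^{1-1/r',r'}(\Gammao))}-\nu(\nabla\bfgs,\nabla\bfw).
\end{equation*}
The three terms on the right-hand side each need to be recognised as the action of an element of $\Vsd{r}$. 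The term $\bfw\mapsto-(\bbF,\nabla\bfw)$ is bounded on $\Vs{r'}$ since $\bbF\in L^r(\Omega)^{2\times 2}$, with norm $\le\|\bbF\|_r\le c\,\|\bff\|_{\bfW^{-1,r}_0}$ by Lemma~\ref{L4}; similarly $\bfw\mapsto-\nu(\nabla\bfgs,\nabla\bfw)$ is bounded with norm $\le c\,\|\bfgs\|_{1,r}\le c\,\|\bfg\|_{s,r;\,\Gammai}$ by part (a) of Lemma~\ref{L2}. For the boundary term one uses that the trace operator maps $\Vs{r'}$ boundedly into $\bfWp^{1-1/r',r'}(\Gammao)$ (the image lands in the periodic trace space because functions in $\Vs{r'}$ are periodic on $\Gammam,\Gammap$), so $\bfw\mapsto\langle\bfh,\bfw\rangle$ defines an element of $\Vsd{r}$ of norm $\le c\,\|\bfh\|_{\bfWp^{-1/r,r}(\Gammao)}$. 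Hence the right-hand side is a functional $\bfF\in\Vsd{r}$ with $\|\bfF\|_{\vsd{r}}\le c\,[\|\bff\|_{\bfW^{-1,r}_0}+\|\bfg\|_{s,r;\,\Gammai}+\|\bfh\|_{\bfWp^{-1/r,r}(\Gammao)}]$.

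Next I would invoke Theorem~\ref{T1}: since $\cA_r$ is a bijection from $\Vs{r}$ onto $\Vsd{r}$, there is a unique $\bfv\in\Vs{r}$ with $\cA_r\bfv=\bfF$, i.e.\ $\nu(\nabla\bfv,\nabla\bfw)=\langle\bfF,\bfw\rangle$ for all $\bfw\in\Vs{r'}$ (absorbing $\nu$ into the functional, or equivalently noting $\nu\cA_r$ is also a bijection). By the closed-graph bound (inequality~(\ref{3.7}) in the proof of Theorem~\ref{T1}), $\|\bfv\|_{1,r}\le c\,\|\bfF\|_{\vsd{r}}$. Setting $\bfu:=\bfv+\bfgs$ gives a divergence-free $\bfW^{1,r}$ function satisfying (\ref{1.3})--(\ref{1.5}) in the trace sense and the identity (\ref{4.1}) for all $\bfw\in\Vs{r'}$; the triangle inequality together with the two estimates above yields the velocity part of (\ref{4.9}). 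For uniqueness, if $\bfu_1,\bfu_2$ are two weak solutions then $\bfu_1-\bfu_2\in\Vs{r}$ lies in $N(\cA_r)=\{\bfzero\}$ by injectivity of $\cA_r$, so $\bfu_1=\bfu_2$.

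Finally, for the pressure estimate I would apply Theorem~\ref{T2}, part~1), which produces an associated pressure $p\in L^r(\Omega)$ with (\ref{1.1}) holding in the distributional sense; combining (\ref{4.5})--(\ref{4.6}) gives $\|p\|_r\le c\,\|\nu\Delta\bfu+\div\bbF\|_{\bfW^{-1,r}}\le c\,(\|\nabla\bfu\|_r+\|\bbF\|_r)$, and using the velocity bound plus $\|\bbF\|_r\le c\,\|\bff\|_{\bfW^{-1,r}_0}$ we arrive at (\ref{4.9}) (the constant normalised by the additive shift $-\cc04$ from (\ref{4.7}) being absorbed harmlessly since $\Omega$ is bounded). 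I expect the only genuinely delicate point to be the mapping property of the trace on $\Gammao$ into the periodic fractional Sobolev space $\bfWp^{1-1/r',r'}(\Gammao)$ — one must check that the restriction to $\Gammao$ of a function in $\Vs{r'}$ (which is periodic on the lateral boundary but need not vanish at the corners $\Bm,\Bp$) indeed extends $\tau$-periodically within $W^{1-1/r',r'}_{loc}(\gammao)$; this is exactly the corner-compatibility phenomenon analysed in Remark~\ref{R3}, and the estimate there (via the fractional Hardy inequality) is what makes the duality pairing with $\bfh$ well defined. Everything else is a routine bookkeeping of bounded linear functionals.
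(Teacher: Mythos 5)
Your proposal is correct and follows essentially the same route as the paper: the decomposition $\bfu=\bfv+\bfgs$ via Lemma \ref{L2}, the identification of the three right-hand-side terms as functionals in $\Vsd{r}$ with the stated norm bounds, the application of Theorem \ref{T1} to solve $\nu\cA_r\bfv=\bfF+\nu\bfG+\bfH$, and the pressure bound via Theorem \ref{T2} together with (\ref{4.6})--(\ref{4.7}). Your closing observation about the trace of $\Vs{r'}$ landing in the periodic space $\bfWp^{1-1/r',r'}(\Gammao)$ correctly identifies the point the paper handles through Remark \ref{R3}.
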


\begin{proof}
Note that if $\bfg$ satisfies the assumptions of Lemma \ref{L2}
then $\bfg$ also lies in $\bfWp^{1-1/r,r}(\Gammai)$.

Let $\bbF$ be a tensor function in $L^r(\Omega)^{2\times 2}$,
provided by Lemma \ref{L4} and $\bfF\in \Vsd{r}$ be a functional
in $\Vsd{r}$, defined by the formula
\vspace{-4pt}
\begin{equation}
\blangle\bfF,\bfw\brangle_{(\vsd{r},\vs{r'})}\ :=\
-\int_{\Omega}\bbF:\nabla\bfw\; \rmd\bfx \label{4.10}
\end{equation}

\vspace{-4pt} \noindent
for all $\bfw\in\Vs{r'}$. The norm of $\bfF$ in $\Vsd{r}$
satisfies the estimate $\|\bfF\|_{\vsd{r}}\leq c\, \|\bbF\|_r\leq
c\, \|\bff\|_{\bfW^{-1,r}_0}$. Similarly, let $\bfgs$ be the
function, given by Lemma \ref{L2}. Define a functional
$\bfG\in\Vsd{r}$ by the formula
\vspace{-4pt}
\begin{equation}
\blangle\bfG,\bfw\brangle_{(\vsd{r},\vs{r'})}\ :=\
-\int_{\Omega}\nabla\bfgs:\nabla\bfw\; \rmd\bfx \label{4.11}
\end{equation}

\vspace{-4pt} \noindent
for all $\bfw\in\Vs{r'}$. Then $\bfG$ satisfies
$\|\bfG\|_{\vsd{r}}\leq c\, \|\nabla\bfgs\|_r\leq c\,
\|\bfg\|_{s,r;\, \Gammai}$. Finally, let $\bfH$ be a functional in
$\Vsd{r}$, defined by the formula
\begin{equation}
\blangle\bfH,\bfw\brangle_{(\vsd{r},\vs{r'})}\ :=\ \langle\bfh,
\bfw\rangle_{(\bfWp^{-1/r,r}(\Gammao),\bfWp^{1-1/r',r'}(\Gammao))}
\label{4.12}
\end{equation}
for all $\bfw\in\Vs{r'}$. Obviously, $\bfH$ satisfies the estimate
$\|\bfH\|_{\vsd{r}}\leq c\, \|\bfh\|_{\bfW^{-1/r,r}(\Gammao)}$.

Due to Theorem \ref{T1}, the equation
\begin{equation}
\nu\cA_r\bfv\ =\ \bfF+\nu\bfG+\bfH \label{4.13}
\end{equation}
has a unique solution $\bfv\in\Vs{r}$, which satisfies
\begin{equation}
\|\nabla\bfv\|_r\ \leq\ c\, \bigl( \|\bfF\|_{\vsd{r}}+
\|\bfG\|_{\vsd{r}}+\|\bfH\|_{\vsd{r}} \bigr), \label{4.14}
\end{equation}
where $c=c(\nu,\Omega,r)$. The equation (\ref{4.13}) implies that
\begin{align*}
\nu\int_{\Omega}\nabla\bfv:\nabla\bfw\, \rmd\bfx\ =\
-\int_{\Omega}\bbF:\bfw\, \rmd\bfx-\nu\int_{\Omega}\nabla
\bfgs:\nabla\bfw\, \rmd\bfx+ \langle\bfh,
\bfw\rangle_{(\bfWp^{-1/r,r}(\Gammao),\bfWp^{1-1/r',r'}(\Gammao))}
\end{align*}
for all $\bfw\in\Vs{r'}$. Put $\bfu:=\bfv+\bfgs$. One can now
easily verify that $\bfu$ has all properties, stated in Definition
\ref{D1}, which means that $\bfu$ is a weak solution to the
problem (\ref{1.1})--(\ref{1.6}). The existence of an associated
pressure $p$ follows from Theorem \ref{T2}. The estimate
(\ref{4.9}) follows from (\ref{4.14}), from the estimates of the
norms of the functionals $\bfF$, $\bfG$ and $\bfH$ in $\Vsd{r}$
and from (\ref{4.6}) and (\ref{4.7}).
\end{proof}

%=========================================================================

\section{Proof of the inequality (\ref{3.4})} \label{S5}

Recall that in part (b) of the proof of Theorem \ref{T1}, we
assume that $\bbF\in W^{1,2}(\Omega)^{2\times 2}$ satisfies
(\ref{3.1}) and the functions
$\bfv\in\Vs{2}\cap\bfW^{2,2}(\Omega)$ and $p\in W^{1,2}(\Omega)$
satisfy the equation (\ref{3.3}) a.e.~in $\Omega$, the boundary
condition
\begin{equation}
\bfv\ =\ \bfzero \label{5.1}
\end{equation}
on $\Gammai\cup\Gammaw$ and the condition of periodicity
(\ref{1.5}) on $\Gammam$. It follows from \cite[Theorem 2]{TNe4}
that $\bfv$ and $p$ also satisfy the conditions of periodicity
\begin{align}
\frac{\partial\bfv}{\partial\bfn}(x_1,x_2+\tau)\ &=\
-\frac{\partial\bfv}{\partial\bfn}(x_1,x_2) &&
\mbox{for a.a.}\ \bfx\equiv(x_1,x_2)\in\Gammam, \label{5.2} \\
\noalign{\vskip 4pt}
p(x_1,x_2+\tau)\ &=\ p(x_1,x_2) && \mbox{fora.a.}\
\bfx\equiv(x_1,x_2)\in\Gammam \label{5.3} \\
\noalign{\noindent and the boundary condition}
\bigl(-\nu\nabla\bfv+p\br\bbI-\bbF\bigr)\cdot\bfn\ &\equiv\ -\nu\,
\frac{\partial\bfv}{\partial\bfn}+p\br\bfn-\bbF\cdot\bfn\ =\
\bfzero \quad && \mbox{a.e.~on}\ \Gammao. \label{5.4}
\end{align}
Extending $\bfv$, $p$ and $\bbF$ $\tau$--periodically in the
$x_2$--direction, we deduce that the extended functions (which we
again denote by $\bfv$, $p$ and $\bbF$) satisfy the equation
(\ref{3.3}) a.e.~in
\begin{displaymath}
\Omega_{\rm ext}\ :=\ \Omega\cup\Gammam^0\cup\Gammap^0\cup\bigl\{
\bfx=(x_1,x_2)\in\R^2;\ 0<x_1<d,\ \bfx\pm\tau\bfe_2
\in\Omega\bigr\}.
\end{displaymath}
The extended functions satisfy $\bfv\in\bfW^{2,2}(\Omega_{\rm
ext})$, $p\in W^{1,2}(\Omega_{\rm ext})$ and $\bbF\in
W^{1,2}(\Omega_{\rm ext})$. Note that
\begin{displaymath}
\partial\Omega_{\rm ext}\ =\ (\Gammai)_{\rm ext}\cup(\Gammao)_{\rm
ext}\cup(\Gammam-\tau\bfe_2)\cup(\Gammap+\tau\bfe_2)\cup(\Gammaw)_{\rm
ext},
\end{displaymath}
where $(\Gammai)_{\rm ext}$ is the part of the boundary of
$\Omega_{\rm ext}$ on $\gammai$, $(\Gammao)_{\rm ext}$ is the part
of the boundary of $\Omega_{\rm ext}$ on $\gammao$ and
\begin{displaymath}
(\Gammaw)_{\rm ext}\ :=\
\Gammaw\cup(\Gammaw+\tau\bfe_2)\cup(\Gammaw-\tau\bfe_2).
\end{displaymath}

We assume that $r>2$ throughout this section and we split the
derivation of the estimate (\ref{3.4}) into two parts, in which we
obtain the desired estimate in the interior of $\Omega_{\rm ext}$
plus a neighborhood of $\Gammai\cup\Gammaw$ (see Lemma \ref{L4.1})
and in a neighborhood of $\Gammao$ (see Lemma \ref{L4.2}).

\begin{lemma} \label{L4.1}
Let $\Omega'$ be a sub-domain of $\Omega_{\rm ext}$, such that the
distance between $\overline{\Omega'}$ and any of the curves
$\Gammam-\tau\bfe_2$, $\Gammap+\tau\bfe_2$, $\Gammaw-\tau\bfe_2$,
$\Gammaw+\tau\bfe_2$ and $(\Gammao)_{\rm ext}$ is positive. Then
$\bfv$ and $p$ satisfy the estimate
\begin{equation}
\|\bfv\|_{1,r;\, \Omega'}\ \leq\ c\, \|\bbF\|_r, \label{5.5}
\end{equation}
where $c=c(\nu,\Omega,\Omega')$.
\end{lemma}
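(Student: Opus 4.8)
The statement is an interior (and near-$\Gammai\cup\Gammaw$) elliptic regularity estimate for the Stokes-type system \eqref{3.3} in the $L^r$--framework, $r>2$. The plan is to localize the problem away from the ``bad'' parts of $\partial\Omega_{\rm ext}$ and to invoke the general theory of Agmon--Douglis--Nirenberg \cite{AgDoNi} for elliptic systems, exactly as announced in the introduction. First I would cover $\overline{\Omega'}$ by finitely many balls, each of which either lies in the interior of $\Omega_{\rm ext}$ or is centered at a point of $(\Gammai)_{\rm ext}\cup(\Gammaw)_{\rm ext}$, and is small enough that its closure stays at positive distance from $\Gammam-\tau\bfe_2$, $\Gammap+\tau\bfe_2$, $\Gammaw\pm\tau\bfe_2$ and $(\Gammao)_{\rm ext}$; by the hypothesis on $\Omega'$ such a cover exists. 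On each ball I multiply $\bfv$ and $p$ by a fixed smooth cut-off $\varphi$ supported in a slightly larger ball, and write down the system satisfied by $(\varphi\bfv,\varphi p)$: applying the operators in \eqref{3.3} to $\varphi\bfv$ produces the original right-hand side $\div\bbF$ plus commutator terms involving $\nabla\varphi$, $\Delta\varphi$ acting on $\bfv$ and on $p$, i.e.\ lower-order terms that are controlled by $\|\bfv\|_{1,2}$, $\|p\|_{2}$ on the support of $\nabla\varphi$ (hence ultimately by $\|\bbF\|_r$ through the $L^2$ estimate already available from \cite[Theorem~2]{TNe4} together with \eqref{4.6}). The equation of continuity $\div\bfv=0$ becomes $\div(\varphi\bfv)=\nabla\varphi\cdot\bfv$, again a controlled lower-order term.

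Next I would treat the two types of balls. For an interior ball the localized pair solves the Stokes system in a ball with zero boundary data for $\varphi\bfv$, so the classical $L^r$--estimate for the Stokes operator (e.g.\ \cite[Theorem~IV.4.1]{Ga}, or directly ADN applied to the Stokes system as an elliptic system in the sense of \cite{AgDoNi}) gives
\begin{equation*}
\|\varphi\bfv\|_{2,r}+\|\varphi p\|_{1,r}\ \leq\ c\,\bigl(\|\bbF\|_r+\|\bfv\|_{1,2}+\|p\|_2\bigr).
\end{equation*}
For a ball centered on $(\Gammai)_{\rm ext}$ or $(\Gammaw)_{\rm ext}$ the pair $\varphi\bfv$ additionally satisfies the homogeneous Dirichlet condition \eqref{5.1} on the flat (or smooth) piece of boundary inside the ball, and $\Gammaw$ is $C^\infty$ (resp.\ $\gammai$ is a straight line) by the standing assumptions in Section~\ref{S1}; hence the boundary version of the ADN estimate for the Stokes system under the Dirichlet complementing condition applies and yields the same bound. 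Summing over the finite cover, absorbing the cut-offs, and using that the $W^{1,2}$--norm of $\bfv$ and the $L^2$--norm of $p$ are both $\leq c\|\bbF\|_r$ (by the $r=2$ case and $\|\bbF\|_2\le c\|\bbF\|_r$ since $\Omega$ is bounded), I obtain
\begin{equation*}
\|\bfv\|_{2,r;\,\Omega'}+\|p\|_{1,r;\,\Omega'}\ \leq\ c\,\|\bbF\|_r,
\end{equation*}
which is stronger than \eqref{5.5}; dropping the second-derivative and pressure terms gives the stated inequality.

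The main obstacle is making the ADN machinery apply cleanly at the boundary pieces $(\Gammai)_{\rm ext}$ and $(\Gammaw)_{\rm ext}$: one has to check that the Stokes operator together with the Dirichlet boundary operator for the velocity forms an elliptic boundary value problem satisfying the complementing (Lopatinski--Shapiro) condition, so that \cite[Theorem~10.5]{AgDoNi} (or its reformulation for Stokes) gives $L^r$ a priori estimates; this is classical but must be cited correctly, and the corner points $\Am$, $\Ap$ are avoided precisely because $(\Gammai)_{\rm ext}$ extends $\Gammai$ to the full line $\gammai$, so no corner of $\Omega$ lies inside $\Omega'$. A secondary technical point is the bookkeeping of the commutator terms and the choice of the finite cover so that every lower-order contribution is supported where the already-established $L^2$--regularity suffices; this is routine once the cover is fixed. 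I would also note that since $\bfv\in\bfW^{2,2}(\Omega_{\rm ext})$ and $p\in W^{1,2}(\Omega_{\rm ext})$ are already known, there is no circularity: the estimate is an a priori bound for a solution whose existence and $L^2$--regularity are given, and the constant depends only on $\nu$, $\Omega$ and the geometry of $\Omega'$.
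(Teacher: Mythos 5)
Your overall strategy -- localize away from the periodic boundaries and the outflow, reduce to a Dirichlet problem for the Stokes system in a smooth bounded domain, apply the Cattabriga/ADN $L^r$ theory, and absorb the cut-off commutators via the already known $L^2$ regularity -- is exactly the paper's strategy (the paper uses a single cut-off $\eta$ whose support $\Omega''$ is a $C^2$ domain containing $\Gammaw$ in its boundary, rather than a finite ball cover, but that is a cosmetic difference). However, there is a genuine error in the key display. You claim the second-order estimate
\begin{equation*}
\|\varphi\bfv\|_{2,r}+\|\varphi p\|_{1,r}\ \leq\ c\,\bigl(\|\bbF\|_r+\|\bfv\|_{1,2}+\|p\|_2\bigr),
\end{equation*}
and hence the ``stronger'' bound $\|\bfv\|_{2,r;\,\Omega'}+\|p\|_{1,r;\,\Omega'}\leq c\,\|\bbF\|_r$. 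This cannot be true. The force in (\ref{3.3}) is $\div\bbF$, and $\|\bbF\|_r$ controls it only in $\bfW^{-1,r}(\Omega)$, not in $\bfL^r(\Omega)$; the strong ADN/Cattabriga estimate you invoke requires the force in $L^r$ and would produce $\|\div\bbF\|_r$ (essentially a $W^{1,r}$-norm of $\bbF$) on the right-hand side. Likewise the commutator term $-(\nabla\varphi)\,p$ is controlled by $\|p\|_2$ only when measured in $\bfW^{-1,r}$, not in $\bfL^r$ for $r>2$. From data of this regularity one can only conclude $\bfv\in\bfW^{1,r}$ and $p\in L^r$ locally.

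The repair is straightforward and is what the paper does: keep the right-hand side in divergence form and apply the \emph{first-order} estimate for weak ($q$-generalized) solutions of the Dirichlet--Stokes problem with force in $\bfW^{-1,r}$ and prescribed divergence in $L^r$ (Proposition I.2.3 of \cite{Te}, p.~35), which gives
\begin{equation*}
\|\eta\bfv\|_{1,r}\ \leq\ c\,\bigl(\|\bfft\|_{\bfW^{-1,r}}+\|\nabla\eta\cdot\bfv\|_r\bigr)\ \leq\ c\,\bigl(\|\bbF\|_r+\|\bfv\|_{1,2}+\|p\|_2\bigr)\ \leq\ c\,\|\bbF\|_r,
\end{equation*}
using $\bfL^2(\Omega)\hookrightarrow\bfW^{-1,r}(\Omega)$, $\bfW^{1,2}(\Omega)\hookrightarrow\bfL^r(\Omega)$ and the $L^2$ theory of \cite{TNe4}. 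Since the lemma asserts only the $W^{1,r}$ bound (\ref{5.5}), your argument goes through once the second-order claim is dropped and replaced by this weak-form estimate; but as written the central inequality is false, and the assertion that you obtain something stronger than (\ref{5.5}) must be withdrawn. A minor additional caveat: the smoothness of $\Gammaw$ needed for the boundary estimate is not among the explicitly stated hypotheses of Section \ref{S1} (only $\Gammam$, $\Gammap$ are declared $C^\infty$), so it should be flagged as an implicit assumption rather than cited as standing.
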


\begin{proof}
We may assume, without loss of generality, that
$\Gammaw\subset\partial\Omega'$. There exists $\rho>0$ so small
that
\begin{displaymath}
U_{\rho}(\Omega')\ :=\ \bigl\{ \bfx=(x_1,x_2)\in\R^2\smallsetminus
P;\ 0<x_1<d,\ \dist(\bfx,\Omega')<\rho \bigr\}
\end{displaymath}
is a subset of $\Omega_{\rm ext}$. (Recall that $P$ is a compact
set, see Fig.~1.) There exists an infinitely differentiable
function $\eta$ in $\Omega_{\rm ext}$, such that $\eta=1$ in
$\Omega'$, $\supp\eta\subset U_{\rho}(\Omega')$ and
$\Omega'':=\supp\eta$ is of the class $C^2$. Denote
$\bfvt:=\eta\bfv$ and $\pt:=\eta p$. The functions $\bfvt$, $\pt$
represent a strong solution of the problem
\begin{align}
-\nu\Delta\bfvt+\nabla\pt\ &=\ \bfft && \mbox{in}\ \Omega'',
\label{5.6} \\
\div\bfvt\ &=\ \hht && \mbox{in}\ \Omega'', \label{5.7} \\
\bfvt\ &=\ \bfzero && \mbox{on}\ \partial\Omega'',
\label{5.8}
\end{align}
where
\begin{displaymath}
\bfft:=\eta\, \div\bbF-2\nu\br\nabla\eta\cdot\nabla\bfv-\nu\,
(\Delta\eta)\, \bfv-(\nabla\eta)\, p \qquad \mbox{and} \qquad
\hht:=\nabla\eta\cdot\bfv.
\end{displaymath}
Applying Proposition I.2.3 from \cite{Te}, p.~35, we obtain the
estimate
\begin{equation}
\|\bfvt\|_{1,r;\, \Omega''}\ \leq\ c\, \bigl(
\|\bfft\|_{\bfW^{-1,r}}+\|\hht\|_r \bigr). \label{5.9}
\end{equation}
Since $\bfL^2(\Omega)\hookrightarrow\bfW^{-1,r}(\Omega)$ and
$\bfW^{1,2}(\Omega)\hookrightarrow\bfL^r(\Omega)$, we can estimate
the terms on the right hand side as follows:
\begin{align*}
\|\bfft\|_{\bfW^{-1,r}}\ &\leq\ c\, \|\bbF\|_r+c\, \|\bfv\|_r+c\,
\|p\|_{-1,r}\ \leq\ \|\bbF\|_r+c\, \|\bfv\|_{1,2}+c\, \|p\|_2, \\
\|\hht\|_r\ &\leq\ c\, \|\bfv\|_r\ \leq\ c\, \|\bfv\|_{1,2}.
\end{align*}
Due to \cite[Lemma 1 and estimate (2.5)]{TNe4}, we have
$\|\bfv\|_{1,2}+\|p\|_2\leq c\, \|\bfF\|_{\vsd{2}}\leq c\,
\|\bbF\|_2\leq c\, \|\bbF\|_r$. Substituting these estimates to
(\ref{5.9}), we obtain: $\|\bfvt\|_{1,r;\, \Omega''}\leq c\,
\|\bbF\|_r$. Since $\Omega'\subset\Omega''$ and $\eta=1$ on
$\Omega'$, we obtain (\ref{5.5}).
\end{proof}

\begin{lemma} \label{L4.2}
Let $\Omega'$ be a sub-domain of $\Omega_{\rm ext}$, such that
$\overline{\Omega'}$ has a positive distance from any of the sets
$\Gammam-\tau\bfe_2$, $\Gammap+\tau\bfe_2$, $(\Gammai)_{\rm ext}$
and $(\Gammaw)_{\rm ext}$. Then $\bfv$ and $p$ satisfy the
estimate
\begin{equation}
\|\bfv\|_{1,r;\, \Omega'}\ \leq\ c\, \|\bbF\|_r, \label{5.10}
\end{equation}
where $c=c(\nu,\Omega,\Omega')$.
\end{lemma}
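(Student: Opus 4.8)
\ The aim is to establish the $W^{1,r}$--estimate of $\bfv$ near the outflow $\Gammao$, which is the last missing piece in the derivation of (\ref{3.4}). The set $\Omega'$ here sits away from the curves where $\bfv$ satisfies periodicity or the homogeneous Dirichlet condition, so the relevant boundary portion of $\overline{\Omega'}$ lies on $(\Gammao)_{\rm ext}\cup\Gammam^0\cup\Gammap^0$. On $(\Gammao)_{\rm ext}$ the functions $\bfv$ and $p$ satisfy the ``do nothing''--type condition (\ref{5.4}), and across $\Gammam^0$, $\Gammap^0$ the periodic extension guarantees enough regularity; but the essential difficulty is that near the segment $(\Gammao)_{\rm ext}$ we have a \emph{Neumann--type} boundary condition for the Stokes system rather than a Dirichlet one, so one cannot invoke the interior/Dirichlet estimates used in Lemma \ref{L4.1}. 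The plan is therefore to localize near $\Gammao$, flatten the boundary, and apply the Agmon--Douglis--Nirenberg theory \cite{AgDoNi} for general elliptic systems with complementing boundary conditions, exactly as announced in the introduction.

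First I would introduce a cut-off function $\eta\in C^{\infty}(\Omega_{\rm ext})$ with $\eta\equiv 1$ on $\Omega'$ and $\supp\eta$ contained in a slightly larger set $\Omega''\subset\Omega_{\rm ext}$ whose boundary, apart from a piece of $(\Gammao)_{\rm ext}$, is smooth and stays away from the ``bad'' curves; since $\Gammam$ and $\Gammap$ are $C^{\infty}$ one can arrange $\Omega''$ to be, say, $C^2$ up to $(\Gammao)_{\rm ext}$. Setting $\bfvt:=\eta\bfv$ and $\pt:=\eta p$, the pair $(\bfvt,\pt)$ solves a Stokes system $-\nu\Delta\bfvt+\nabla\pt=\bfft$, $\div\bfvt=\hht$ in $\Omega''$, where $\bfft$ and $\hht$ collect the commutator terms $\eta\,\div\bbF$, $\nabla\eta\cdot\nabla\bfv$, $(\Delta\eta)\bfv$, $(\nabla\eta)p$ and $\nabla\eta\cdot\bfv$, precisely as in the proof of Lemma \ref{L4.1}. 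On the portion of $\partial\Omega''$ lying in $(\Gammao)_{\rm ext}$ the condition (\ref{5.4}) together with $\bbF\in W^{1,2}$ gives $-\nu\,\partial\bfvt/\partial\bfn+\pt\,\bfn = \eta\,\bbF\cdot\bfn - \nu(\partial\eta/\partial\bfn)\bfv =: \bfgt$ with $\bfgt\in\bfW^{1-1/r,r}$-type regularity controlled by $\|\bbF\|_r$ and lower-order terms; on the remaining (smooth, artificial) part of $\partial\Omega''$ we may take $\bfvt=\bfzero$.

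Next I would appeal to the $L^r$--estimate for this mixed boundary--value problem for the Stokes operator. Since the Stokes system with the natural (stress/``do nothing'') boundary condition is an elliptic system satisfying the Agmon--Douglis--Nirenberg complementing condition, one obtains
\begin{equation}
\|\bfvt\|_{1,r;\,\Omega''}+\|\pt\|_{r;\,\Omega''}\ \leq\ c\,\bigl(\|\bfft\|_{\bfW^{-1,r}}+\|\hht\|_r+\|\bfgt\|_{\bfWp^{-1/r,r}(\Gammao)}+\|\bfvt\|_{r;\,\Omega''}\bigr),
\end{equation}
the last term being absorbable by a standard compactness/uniqueness argument for the homogeneous problem, or simply estimated via $\bfW^{1,2}\hookrightarrow\bfL^r$ in 2D. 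Then, exactly as in Lemma \ref{L4.1}, I would estimate each term on the right: $\|\bfft\|_{\bfW^{-1,r}}\leq c\,\|\bbF\|_r+c\,\|\bfv\|_{1,2}+c\,\|p\|_2$, $\|\hht\|_r\leq c\,\|\bfv\|_{1,2}$, $\|\bfgt\|_{\bfWp^{-1/r,r}(\Gammao)}\leq c\,\|\bbF\|_r+c\,\|\bfv\|_{1,2}$, using the Sobolev embeddings $\bfL^2\hookrightarrow\bfW^{-1,r}(\Omega)$ and $\bfW^{1,2}\hookrightarrow\bfL^r(\Omega)$ valid in dimension two, and then invoking \cite[Lemma 1 and estimate (2.5)]{TNe4} to bound $\|\bfv\|_{1,2}+\|p\|_2\leq c\,\|\bbF\|_2\leq c\,\|\bbF\|_r$. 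Because $\eta\equiv 1$ on $\Omega'$ this yields $\|\bfv\|_{1,r;\,\Omega'}\leq c\,\|\bbF\|_r$, which is (\ref{5.10}).

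The main obstacle, and the technical heart of the argument, is the rigorous application of the ADN estimates at the ``corner'' structure near the ends of $(\Gammao)_{\rm ext}$: one must either choose $\Omega''$ so that its boundary meets $(\Gammao)_{\rm ext}$ only along a piece where the boundary is genuinely $C^2$ and the boundary operator is of a single (Neumann) type — using the periodic extension to push the troublesome transition off $\Omega'$ — or handle a mixed Dirichlet/Neumann junction, which is not directly covered by \cite{AgDoNi}. I expect the cleanest route is the former: after the $\tau$--periodic extension, a neighbourhood of $\Gammao$ in $\Omega_{\rm ext}$ no longer ``sees'' the periodic curves as boundary, so $\Omega''$ can be taken with smooth boundary carrying a uniform stress-type condition, and the standard $L^r$ Stokes regularity with the natural boundary condition applies verbatim. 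Verifying the complementing condition for the Stokes system with the condition $-\nu\,\partial\bfv/\partial\bfn+p\,\bfn=\bfg$ (equivalently, reducing to a higher-order scalar elliptic problem and checking the Lopatinskii--Shapiro condition) is the step that needs care but is classical.
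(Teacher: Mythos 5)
Your localization, the bookkeeping of the commutator terms, and the identification of the difficulty (a stress--type condition on $(\Gammao)_{\rm ext}$ rather than a Dirichlet one) are all sound, but the displayed a priori estimate on which everything rests is a genuine gap: an inequality of the form $\|\bfvt\|_{1,r}+\|\pt\|_r\leq c\br\bigl(\|\bfft\|_{\bfW^{-1,r}}+\|\hht\|_r+\dots\bigr)$ for the Stokes system with the ``do nothing'' condition on part of the boundary is not a citable fact --- it is essentially the statement the paper is proving. Theorem 10.2 of \cite{AgDoNi} yields estimates only with data measured in Sobolev norms of \emph{nonnegative} order; for the Stokes system with your boundary operator it would give $\|\bfvt\|_{2,r}\leq c\br(\|\bfft\|_r+\dots)$, which requires $\eta\,\div\bbF\in\bfL^r$ and boundary data in $\bfW^{1-1/r,r}(\Gammao)$. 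Since (\ref{3.4}) must be controlled by $\|\bbF\|_r$ alone, the term $\eta\,\div\bbF$ can only be measured in a negative--order norm, and the boundary datum $\eta\,\bbF\cdot\bfn-\nu(\partial\eta/\partial\bfn)\bfv$ makes sense at best in $\bfW^{-1/r,r}(\Gammao)$ (via the generalized normal trace), not in $\bfW^{1-1/r,r}(\Gammao)$ with a bound by $\|\bbF\|_r$; neither is admissible data for the ADN estimates. For the Dirichlet problem the needed $\bfW^{-1,r}\to\bfW^{1,r}$ estimate is Proposition I.2.3 of \cite{Te}, which is exactly what Lemma \ref{L4.1} uses; no analogue for the stress condition can simply be quoted, and producing one (e.g.\ by a very weak solution/duality argument for the mixed problem) would be a substantial piece of work in itself.

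The paper's proof circumvents the negative--order issue by a reformulation that removes all derivatives from $\bbF$ \emph{before} ADN is invoked. Since $\div\bfv=0$ one writes $\bfv=\nabla^{\perp}\varphi$, and since the tensor $\bbZ:=-\nu\nabla\bfv+p\br\bbI-\bbF$ is row--wise divergence free by (\ref{3.3}), its rows are written as $\nabla^{\perp}\psi_1$, $\nabla^{\perp}\psi_2$. The resulting system (\ref{5.12}) for $(\varphi,p,\psi_1,\psi_2)$ contains $\bbF$ undifferentiated; the condition (\ref{5.4}) turns into the Dirichlet conditions $\psi_j=k_j$ on $(\Gammao)_{\rm ext}$; and after localization and elimination of $p$ one arrives at a fourth--order ADN--elliptic system with complementing boundary conditions whose right--hand sides are in $L^r$ and are bounded by $\|\bbF\|_r$, thanks to the separately proved estimates (\ref{5.17}), (\ref{5.26}) and (\ref{5.27}) for $\|\psi_j\|_r$, $\|\varphi\|_{1,r}$ and $|k_j|$. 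Theorem 10.2 of \cite{AgDoNi} then gives $\|\varphi\|_{2,r;\,\Omega'}\leq c\,\|\bbF\|_r$, i.e.\ (\ref{5.10}). If you wish to keep your direct route, you must first prove --- not cite --- the negative--order estimate for the mixed Dirichlet/``do nothing'' Stokes problem.
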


\begin{proof}
Since $\div\bfv=0$, there exists $\varphi\in W^{3,2}(\Omega_{\rm
ext})$, such that $\bfv\equiv(v_1,v_2)=\nabla^{\perp}\varphi
\equiv(-\partial_2 \varphi,\partial_1\varphi)$. As $\bfv=\bfzero$
on $(\Gammai)_{\rm ext}$, the function $\varphi$ satisfies
$\nabla^{\perp}\varphi=\bfzero$ on $(\Gammai)_{\rm ext}$. Since
$\varphi$ is determined uniquely up to an additive constant, we
can choose $\varphi$ so that
\begin{equation}
\varphi=0 \quad \mbox{and} \quad \nabla\varphi=\bfzero \qquad
\mbox{on}\ (\Gammai)_{\rm ext}. \label{5.11}
\end{equation}
Put
\begin{displaymath}
\bbZ\ :=\ -\nu\nabla\bfv+p\br\bbI-\bbF.
\end{displaymath}
Thus, if we denote by $Z_{ij}$ ($i,j=1,2$) the entries of $\bbZ$
and by $F_{ij}$ ($i,j=1,2$) the entries of $\bbF$, we can write
this formula in the form
\begin{align*}
\left( \begin{array}{cc} Z_{11}, & Z_{12} \\ Z_{21}, & Z_{22}
\end{array} \right)\ &=\ -\nu\, \left( \begin{array}{cc}
\partial_1v_1, & \partial_2 v_1 \\ \partial_1v_2, & \partial_2v_2
\end{array} \right)+\left( \begin{array}{cc} p, & 0 \\ 0, & p
\end{array} \right)-\left( \begin{array}{cc} F_{11}, & F_{12} \\
F_{21}, & F_{22} \end{array} \right) \\ \noalign{\vskip 2pt}
&=\ -\nu\, \left( \begin{array}{rr} -\partial_{12}\varphi, &
-\partial_{22}\varphi \\ \partial_{11}\varphi, & \partial_{21}
\varphi \end{array} \right)+\left( \begin{array}{cc} p, & 0 \\ 0,
& p \end{array} \right)- \left( \begin{array}{cc} F_{11}, & F_{12}
\\ F_{21}, & F_{22} \end{array} \right).
\end{align*}
Equation (\ref{3.3}) says that $\div\bbZ=\bfzero$, which means
that $\partial_jZ_{ij}=0$ for $i=1,2$. Hence there exist functions
$\psi_1,\, \psi_2\in W^{2,2}(\Omega_{\rm ext})$, such that
$Z_{11}=-\partial_2\psi_1$, $Z_{12}=\partial_1\psi_1$,
$Z_{21}=-\partial_2\psi_2$, $Z_{22}=\partial_1\psi_2$. Thus, we
obtain the equation
\begin{equation}
\left( \begin{array}{rr} -\partial_2\psi_1, & \partial_1\psi_1 \\
-\partial_2\psi_2, & \partial_1\psi_2 \end{array} \right)\ =\
-\nu\, \left( \begin{array}{rr} -\partial_{12}\varphi, &
-\partial_{22}\varphi \\ \partial_{11}\varphi, & \partial_{21}
\varphi \end{array} \right)+\left( \begin{array}{cc} p, & 0 \\ 0,
& p \end{array} \right)- \left( \begin{array}{cc} F_{11}, & F_{12}
\\ F_{21}, & F_{22} \end{array} \right). \label{5.12}
\end{equation}
This tensorial equation can also be considered to be a system of
four equations for four unknowns: $\varphi$, $p$, $\psi_1$ and
$\psi_2$. Since $\bfn=\bfe_1$ on $\Gammao$, the boundary condition
(\ref{5.4}) yields $Z_{11}=Z_{12}=0$ on $\Gammao$. This means that
$\partial_2\psi_1=\partial_2\psi_2=0$ on $\Gammao$, which implies
that $\psi_1$ and $\psi_2$ are constant on $\Gammao$. Let us
denote the constants by $k_1$ and $k_2$. Thus, we obtain the
boundary conditions
\begin{equation}
\psi_1=k_1, \quad \psi_2=k_2 \qquad \mbox{on}\ (\Gammao)_{\rm
ext}. \label{5.13}
\end{equation}
Let us concretely choose $k_1$ and $k_2$ so that
\begin{equation}
\int_{\Omega}\psi_1\; \rmd\bfx\ =\ \int_{\Omega}\psi_2\; \rmd\bfx\
=\ 0. \label{5.14}
\end{equation}
Later on in this proof, we shall need estimates of $\|\psi_1\|_r$
and $\|\psi_2\|_r$. Let us begin with $\|\psi_1\|_r$:
\begin{displaymath}
\|\psi_1\|_r\ =\ \sup_{w\in L^{r'}(\Omega);\ w\not=0}\
\|w\|_{r'}^{-1}\, \biggl|\int_{\Omega}\psi_1\br w\; \rmd\bfx
\biggr|.
\end{displaymath}
The function $w\in L^{r'}(\Omega)$ can be written in the form
$w=\overline{w}+w'$, where $\overline{w}:=\int_{\Omega}w\,
\rmd\bfx$ and $\int_{\Omega}w'\, \rmd\bfx=0$. Then, obviously,
$\|w'\|_{r'}\leq c\, \|w\|_{r'}$, where $c=c(\Omega)$. Moreover,
there exists $\bfz\in\bfW^{1,r}_0(\Omega)$, such that
$\div\bfz=w'$ and $\|\bfz\|_{1,r'}\leq c\, \|w'\|_{r'}$, see
\cite[Theorem III.3.3]{Ga}. Hence
\begin{align}
\|\psi_1\|_r\ &\leq\ c\, \sup_{w\in L^{r'}(\Omega);\ w\not=0}\
\|w'\|_{r'}^{-1}\, \biggl| \overline{w}\int_{\Omega}\psi_1\;
\rmd\bfx+\int_{\Omega}\psi_1\br w'\, \rmd\bfx \biggr| \nonumber \\
&\leq\ c\, \sup_{\bfz\in\bfW^{1,r'}_0(\Omega);\ \bfz\not=0}\
\|\nabla\bfz\|_{r'}^{-1}\, \biggl| \int_{\Omega}\psi_1\,
\div\bfz\; \rmd\bfx \biggr| \nonumber \\
&=\ c\, \sup_{\bfz\in\bfW^{1,r'}_0(\Omega);\ \bfz\not=0}\
\|\nabla\bfz\|_{r'}^{-1}\, \biggl|
\int_{\Omega}\nabla\psi_1\cdot\bfz\; \rmd\bfx \biggr| \nonumber \\
\noalign{\vskip 2pt}
&=\ c\, \|\nabla\psi_1\|_{\bfW^{-1,r}}\ \leq\ c\,
\|\bbZ\|_{\bfW^{-1,r}}\ =\ c\, \bigl\|
-\nu\nabla\bfv+p\br\bbI-\bbF \bigr\|_{\bfW^{-1,r}} \nonumber \\
\noalign{\vskip 2pt}
&\leq\ c\, \bigl( \|\bfv\|_r+\|p\|_{\bfW^{-1,r}}+\|\bbF\|_r
\bigr). \label{5.15}
\end{align}
As $\bfW^{1,r'}_0(\Omega)\hookrightarrow\bfL^2(\Omega)$, we also
have $\bfL^2(\Omega)\hookrightarrow\bfW^{-1,r}(\Omega)$. Hence
$\|p\|_{\bfW^{-1,r}}\leq c\, \|p\|_2$. Furthermore, as
$\bfW^{1,2}(\Omega)\hookrightarrow\bfL^r(\Omega)$, we also have
$\|\bfv\|_r\leq c\, \|\bfv\|_{1,2}$. Thus, (\ref{5.15}) yields
\begin{equation}
\|\psi_1\|_r\ \leq\ c\, \bigl( \|\bfv\|_{1,2}+\|p\|_2+\|\bbF\|_r
\bigr). \label{5.16}
\end{equation}
Due to \cite[Lemma 1 and estimate (2.5)]{TNe4}, the right hand
side is less than or equal to $c\, \bigl(
\|\bfF\|_{\vsd{2}}+\|\bbF\|_r \bigr)\ \leq\ c\, \bigl( \|\bbF\|_2+
\|\bbF\|_r \bigr)$, which is less than or equal to $c\,
\|\bbF\|_r$. Substituting this to (\ref{5.16}) and taking into
account that $\psi_2$ can be estimated in the same way, we finally
obtain
\begin{equation}
\|\psi_1\|_r+\|\psi_2\|_r\ \leq\ c\, \|\bbF\|_r. \label{5.17}
\end{equation}

In order to obtain the desired estimate (\ref{3.4}), we will apply
Theorem 10.2 from the paper \cite{AgDoNi} by S.~Agmon. A.~Douglis
and L.~Nirenberg. This theorem, however, concerns a boundary value
problem in a half-space, which means a half-plane in the case of a
problem in 2D. In order to transform the problem (\ref{5.12}),
(\ref{5.13}) to a problem in the half-plane $x_1<d$, we apply the
cut--off function technique; there exists $\rho>0$ so small that
\begin{displaymath}
U_{\rho}(\Omega')\ :=\ \bigl\{\bfx=(x_1,x_2)\in\R^2;\ x_1<d,\
\dist(\bfx,\Omega')<\rho \bigr\}
\end{displaymath}
is a subset of $\Omega_{\rm ext}$. Let $\eta$ be an infinitely
differentiable function in $\Omega_{\rm ext}$, such that $\eta=1$
in $\Omega'$ and $\eta=0$ in $\Omega_{\rm ext}\smallsetminus
U_{\rho}(\Omega')$. Multiplying (\ref{5.12}) by $\eta$, and
denoting $\varphit:=\eta\varphi$, $\psit_1:=\eta\psi_1$,
$\psit_2:=\eta\psi_2$ and $\pt:=\eta p$, we get
\begin{align}
& \left( \begin{array}{rr} -\partial_2\psit_1, & \partial_1\psit_1 \\
-\partial_2\psit_2, & \partial_1\psit_2 \end{array} \right)+ \nu\,
\left( \begin{array}{rr} -\partial_{12}\varphit, &
-\partial_{22}\varphit \\ \partial_{11}\varphit, & \partial_{21}
\varphit \end{array} \right)-\left( \begin{array}{cc} \pt, & 0 \\
0, & \pt \end{array} \right) \nonumber \\
& \hspace{10pt} =\ -\left( \begin{array}{cc} \eta F_{11}, & \eta
F_{12} \\ [1pt] \eta F_{21}, & \eta F_{22} \end{array} \right)+
\left( \begin{array}{rr} -(\partial_2\eta)\, \psi_1, &
(\partial_1\eta)\, \psi_1  \\ [1pt] -(\partial_2\eta)\, \psi_2, &
(\partial_1\eta)\, \psi_2 \end{array} \right)+\left(
\begin{array}{rr} -(\partial_{12}\eta)\, \varphi, &
-(\partial_{22}\eta)\, \varphi \\ (\partial_{11}\eta)\, \varphi, &
(\partial_{21}\eta)\, \varphi \end{array} \right) \nonumber \\
& \hspace{25pt} + \left( \begin{array}{rr}
-(\partial_1\eta)(\partial_2\varphi)-(\partial_2\eta)(\partial_1\varphi),
& -2\br (\partial_2\eta)(\partial_2\varphi) \\ [1pt]
2\br(\partial_1\eta)(\partial_1\varphi), &
(\partial_1\eta)(\partial_2\varphi)+(\partial_2\eta)
(\partial_1\varphi) \end{array} \right). \label{5.18}
\end{align}
Since $\eta$ is supported in the closure of $U_{\rho}(\Omega')$,
we can treat (\ref{5.18}) as a system of four equations in the
half-plane $x_1<d$ for the unknowns $\varphit$, $\psit_1$,
$\psit_2$ and $\pt$ with the boundary conditions
\begin{equation}
\psit_1=\eta\br k_1, \quad \psit_2=\eta\br k_2 \qquad \mbox{on}\
\gammao. \label{5.19}
\end{equation}
In order to simplify the equation (\ref{5.18}) and to use a
notation, consistent with \cite{AgDoNi}, we eliminate $\pt$ by
subtracting the two corresponding equations and denote
$u_1:=\varphit$, $u_2:=\psit_1$ and $u_3:=\psit_2$. Then the
problem (\ref{5.18}), (\ref{5.19})  reduces to the system of three
equations
\begin{align}
2\nu\, \partial_{12}u_1+\partial_2u_2+\partial_1u_3\ &=\
-\eta\br(F_{22}-F_{11})+(\partial_1\eta)\br\psi_2+
(\partial_2\eta)\br\psi_1+2\br(\partial_{12}\eta)\br\varphi
\nonumber \\
& \hspace{15pt} +2\br(\partial_1\eta)(\partial_2\varphi)+
2\br(\partial_2\eta) (\partial_1\varphi), \label{5.20} \\
\noalign{\vskip 2pt}
\nu\, \partial_{11}u_1-\partial_2u_3\ &=\ -\eta
F_{21}-(\partial_2\eta)\br\psi_2+(\partial_{11}\eta)\br\varphi+
2\br(\partial_1\eta)(\partial_1\varphi), \label{5.21} \\
\noalign{\vskip 2pt}
\nu\, \partial_{22}u_1-\partial_1u_2\ &=\ -\eta
F_{12}-(\partial_1\eta)\br\psi_1+(\partial_{22}\eta)\br\varphi+
2\br(\partial_2\eta)(\partial_2\varphi) \label{5.22}
\end{align}
with the boundary conditions
\begin{equation}
u_2=\eta\br k_1, \quad u_3=\eta\br k_2 \qquad \mbox{on}\ \gammao.
\label{5.23}
\end{equation}
Obviously, the leading differential operator in
(\ref{5.20})--(\ref{5.22}) is
\begin{displaymath}
\operL(\partial_1,\partial_2)\ :=\ \left( \begin{array}{ccc}
2\nu\br\partial_1\partial_2, & \partial_2, & \partial_1, \\
\nu\br\partial_1^2, & 0, & -\partial_2, \\ \nu\br\partial_2^2, &
-\partial_1, & 0 \end{array} \right).
\end{displaymath}
The determinant of $\operL(\xi_1,\xi_2)$ equals $-\nu\,
(\xi_1^2+\xi_2^2)^2$, which is a polynomial of degree $2m=4$. With
this information, one can verify that the system
(\ref{5.20})--(\ref{5.22}) is ``uniformly elliptic'' of order $4$
in the sense of Agmon--Douglis--Nirenberg, which means that it
satisfies the conditions (1.1)--(1.7) from \cite{AgDoNi}, pp.~38,
39, and the so called ``supplementary condition'', see
\cite[p.~39]{AgDoNi}. Moreover, the boundary conditions
(\ref{5.23}), the number of which is $m=2$, satisfy condition
(2.2) (see \cite[p.~42]{AgDoNi}) and have all required properties
of the so called ``complementing boundary conditions'', formulated
in \cite{AgDoNi}, p.~42--43. The verification is elementary,
however technical, hence we do not provide the details here.

Note, that one can also find the description and explanation of
the conditions that enable one to call a considered system
``uniformly elliptic'' and considered boundary conditions to be
``complementing'' in \cite[Appendix D] {BoGu}.

Denote by $\R^2_{d-}$ the half-plane $x_1<d$.

The inclusions $u_1\in W^{3,2}(\R^2_{d-})\hookrightarrow
W^{2,r}(\R^2_{d-})$, $u_2,u_3\in W^{2,2}(\R^2_{d-})\hookrightarrow
W^{1,r}(\R^2_{d-})$ and the verification of the aforementioned
conditions from \cite{AgDoNi} enable us to apply Theorem 10.2 from
\cite{AgDoNi}, which yields the estimate
\begin{align}
\|u_1\|_{2,r;\, \R^2_{d-}} & +\|u_2\|_{1,r;\,
\R^2_{d-}}+\|u_3\|_{1,r;\, \R^2_{d-}} \nonumber \\
&\leq\ c\, \sum_{i=1}^3 \|\ft_i\|_{r;\, \R^2_{d-}}+c\, \bigl(
\|\eta\br k_1\|_{1-1/r,r;\,
\gammao}+\|\eta\br k_2\|_{1-1/r,r;\, \gammao} \bigr) \nonumber \\
&\leq\ c\, \sum_{i=1}^3 \|\ft_i\|_{r;\, \R^2_{d-}}+c\, \bigl(
|k_1|+|k_2| \bigr), \label{5.24}
\end{align}
where $\ft_1,\, \ft_2$ and $\ft_3$ denote the right hand sides of
equations (\ref{5.20}), (\ref{5.21}) and (\ref{5.22}),
respectively, and $c$ is independent of $u_i$, $\ft_i$ ($i=1,2,3$)
and $k_1$, $k_2$. The first term on the right hand side of
(\ref{5.24}) can be estimated by means of the inequalities
\begin{align}
\sum_{i=1}^3\|\ft_i\|_{r;\, \R^2_{d-}}\ &\leq\ c\, \bigl(
\|\bbF\|_r+\|\psi_1\|_r+\|\psi_2\|_r+\|\varphi\|_{1,r} \bigr).
\label{5.25}
\end{align}
The norms $\|\psi_1\|_r$ and $\|\psi_2\|_r$ can be estimated by
means of (\ref{5.17}). The norm $\|\varphi\|_{1,r}$ can be
estimated as follows:
\begin{equation}
\|\varphi\|_{1,r}\ \leq\ c\, \|\bfv\|_r\ \leq\ c\, \|\bfv\|_{1,2}\
\leq\ c\, \|\bfF\|_{\vsd{2}}\ \leq\ c\, \|\bbF\|_2\ \leq\ c\,
\|\bbF\|_r. \label{5.26}
\end{equation}
(Here, the first inequality holds due to the definition of
$\varphi$ and (\ref{5.11}), the second inequality follows from the
continuous imbedding $\bfW^{1,2}(\Omega)\hookrightarrow
\bfL^r(\Omega)$, the third inequality follows from \cite[Lemma
1]{TNe4}, the fourth inequality follows from the definition of
$\bfF$ and the last inequality holds, because $r>2$.) The second
term on the right hand side of (\ref{5.24}) can be estimated by
means of these inequalities:
\begin{align}
|k_1|+|k_2|\ &=\ |\Omega|^{-1}\, \biggl|
\int_{\Omega}(\psi_1-k_1)\; \rmd\bfx \biggr|+|\Omega|^{-1}\,
\biggl| \int_{\Omega}(\psi_2-k_2)\; \rmd\bfx \biggr| \nonumber \\
&\leq\ c\, \|\nabla(\psi_1-k_1)\|_2+c\, \|\nabla(\psi_2-k_2)\|_2\
\leq\ c\, \|\bbZ\|_2\ \leq\ c\, \bigl( \|\nabla\bfv\|_2+\|p\|_2+
\|\bbF\|_2 \bigr) \nonumber \\ \noalign{\vskip 2pt}
&\leq\ c\, \|\bfF\|_{\vsd{2}}+c\, \|\bbF\|_2\ \leq\ c\,
\|\bbF\|_2\ \leq\ c\, \|\bbF\|_r. \label{5.27}
\end{align}
(The inequalities $\bigl| \int_{\Omega} (\psi_j-k_j)\,
\rmd\bfx\bigr|\leq c\, \|\nabla(\psi_j-k_j)\|_2$ (for $j=1,2$)
hold due to (\ref{5.13}). The first inequality on the last line
follows from Lemma 1 and estimate (2.5) in \cite{TNe4} and the
last estimate follows from the condition $r>2$.) Estimates
(\ref{5.24})--(\ref{5.27}) now yield
\begin{align*}
\|\varphi\|_{2,r;\, \Omega'}\ &\leq\ \|\varphit\|_{2,r;\,
\R^2_{d-}}\ =\ \|u_1\|_{2,r;\, \R^2_{d-}}\ \leq\ c\, \|\bbF\|_r
\end{align*}
Since $\|\bfv\|_{1,r;\, \Omega'}\leq c\, \|\varphi\|_{2,r;\,
\Omega'}$, we obtain inequality (\ref{5.10}).
\end{proof}

\vspace{4pt}
Now the estimate (\ref{3.4}) follows easily from Lemmas \ref{L4.1}
and \ref{L4.2}.

\medskip \noindent
{\bf Acknowledgement.} This work was supported by European
Regional Development Fund-Project ``Center for Advanced Applied
Science'' No.~CZ.02.1.01/0.0/0.0/16\_019/0000778.

\bigskip \medskip
\hspace{1.2pt} \begin{tabular}{ll} Author's address: \quad &
Tom\'a\v{s} Neustupa \\ & Czech Technical University \\ & Faculty
of
Mechanical Engineering \\ & Department of Technical Mathematics \\
& Karlovo n\'am.~13, 121 35 Praha 2 \\ & Czech Republic \\ &
e-mail: \ tomas.neustupa@fs.cvut.cz
\end{tabular}

\end{document}